
\NeedsTeXFormat{LaTeX2e}

\documentclass{amsart}

\usepackage{amsfonts}
\usepackage{amssymb}

\numberwithin{equation}{section}

\theoremstyle{plain}

\newtheorem{theorem}[subsection]{Theorem}
\newtheorem{proposition}[subsection]{Proposition}
\newtheorem{lemma}[subsection]{Lemma}

\theoremstyle{definition}

\renewcommand{\leq}{\leqslant}
\renewcommand{\geq}{\geqslant}



\newcommand{\wh}{\widehat}
\newcommand{\E}{\mathbb{E}}
\newcommand{\Z}{\mathbb{Z}}
\newcommand{\R}{\mathbb{R}}
\newcommand{\C}{\mathbb{C}}

\begin{document}

\title[Appendix]{Appendix to `Roth's theorem on progressions revisited' by J. Bourgain}

\author{Tom Sanders}
\address{Department of Pure Mathematics and Mathematical Statistics\\
University of Cambridge\\
Wilberforce Road\\
Cambridge CB3 0WA\\
England } \email{t.sanders@dpmms.cam.ac.uk}

\maketitle

\section{Introduction}

In this appendix we flesh out the details of two results stated in
the paper. The first is a refinement of Fre{\u\i}man's theorem, a result which has an extensive literature (see, for
example, \cite{YB,MCC,GAF,IZR,MBN} and \cite{TCTVHV}); familiarity with \cite{MCC}
and \cite{IZR} will be assumed, although it is not logically
necessary.
\begin{theorem}\label{thm.improvedfreiman}
Suppose that $A \subset \Z$ is a finite set with $|A+A| \leq K|A|$.
Then $A$ is contained in a multidimensional arithmetic progression
$P$ with
\begin{equation*}
\dim P = O(K^{7/4}\log^3K) \textrm{ and } |P| \leq \exp(O(K^{7/4}\log^3 K))|A|.
\end{equation*}
\end{theorem}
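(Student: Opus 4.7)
The plan is to follow the Chang--Ruzsa--Bogolyubov template for Freiman's theorem, and to extract the exponent $7/4$ (in place of the classical $2$ of the Green--Ruzsa argument) from a sharpened dimension bound for the large spectrum of $A$.

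First I would apply Ruzsa's modelling lemma to pass from $A \subseteq \Z$ to a Freiman $8$-isomorphic image $\tilde{A} \subseteq \Z/N\Z$ with $|\tilde{A}| \geq |A|/2$, $N = O(K^{O(1)}|A|)$, and density $\tilde{\alpha} := |\tilde{A}|/N \geq K^{-O(1)}$; crucially $\log(1/\tilde{\alpha}) = O(\log K)$. Any proper generalised arithmetic progression in $\Z/N\Z$ containing $\tilde{A}$ then lifts through the $8$-isomorphism to a progression in $\Z$, and Ruzsa's covering lemma absorbs the piece of $A$ left out of $\tilde{A}$ at the cost of $O(\log K)$ extra dimensions, which is negligible compared with the main bound.

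Bogolyubov's lemma then supplies a Bohr set $B(\Lambda, \rho_0) \subseteq 2\tilde{A} - 2\tilde{A}$, where $\Lambda$ is a spectrum of $1_{\tilde{A}}$ at some threshold $\eta = \eta(K)$ and $\rho_0$ is an absolute constant. The crux, and the main obstacle, is to show that $\Lambda$ is contained in the $\{-1,0,1\}$-span of a dissociated set $\Lambda_0$ of cardinality
\[
d = O(K^{7/4}\log^2 K).
\]
The classical Chang theorem (via Rudin's inequality) yields only $d = O(K^2\log K)$; pushing the exponent from $2$ down to $7/4$ seems to require combining Plünnecke-type $\ell^p$-control of $\widehat{1_{\tilde{A}}}$ coming from the small-doubling assumption, with a finer, iterative or almost-periodic decomposition of $\Lambda$ in the spirit of Bourgain's proof of Roth's theorem on progressions, and essentially the entire analytic content of the theorem lives here.

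Given such $\Lambda_0$, Minkowski's second theorem furnishes a proper $d$-dimensional arithmetic progression $P_0$ covering the shrunken Bohr set $B(\Lambda_0, \rho_0/(4d)) \subseteq B(\Lambda, \rho_0) \subseteq 2\tilde{A} - 2\tilde{A}$, with volume bound $|P_0| \leq \exp(O(d\log d))\, N$. Plünnecke--Ruzsa gives $|2\tilde{A} - 2\tilde{A}| \leq K^{O(1)}|\tilde{A}|$, so Ruzsa's covering lemma produces a set $X \subseteq \Z/N\Z$ of size $|X| \leq K^{O(1)}$ with $\tilde{A} \subseteq X + P_0$. Absorbing $X$ into an enlarged progression adds a further $O(\log K)$ to the dimension, and pulling back through the Freiman isomorphism yields the desired $P \supseteq A$ with $\dim P = O(K^{7/4}\log^3 K)$ and $|P| \leq \exp(O(K^{7/4}\log^3 K))|A|$.
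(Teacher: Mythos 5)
Your outline runs into a serious quantitative obstruction at the covering step, and the intermediate ``sharpened Chang lemma'' you hope for is not what the paper proves (nor would it suffice). Taking the second point first: the route via a dissociated set $\Lambda_0$ of size $d=O(K^{7/4}\log^2 K)$ spanning the spectrum, followed by Minkowski and Ruzsa covering, does not close. After Minkowski the progression $P_0\subset B(\Lambda_0,\rho_0/(4d))$ has size only $|P_0|\geq \exp(-O(d\log d))N$, so when you apply Ruzsa's covering lemma to $\tilde A$ against $P_0$ the covering set $X$ has size $|X|\leq |\tilde A+P_0|/|P_0|\leq K^{O(1)}\exp(O(d\log d))$ -- exponential in $d$, not $K^{O(1)}$ as you claim. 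Absorbing those translates by Chang's iterated covering costs $O(K\log|X|)=O(Kd\log d)$ dimensions, which with $d=K^{7/4}\log^2 K$ gives a final dimension of order $K^{11/4}\log^3 K$, far worse than $K^{7/4}\log^3 K$. (As an aside, the classical Chang spectrum bound at the Bogolyubov threshold $\rho\sim K^{-1/2}$, density $\alpha\sim K^{-O(1)}$, is already $O(K\log K)$, not $O(K^2\log K)$; the $K^2$ appears only after Minkowski and covering. So your target of $O(K^{7/4}\log^2 K)$ would not even improve on the known spectrum bound, and yet it still would not deliver $K^{7/4}$ at the end.)

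The paper's mechanism is structurally different. It does not try to shrink the dissociated dimension of a single spectrum; Theorem \ref{thm.mainrefinement}, proved by an iterative density-increment argument (Lemma \ref{lem.iterationlemma} driven by Proposition \ref{prop.densityincrement}), places inside $2A''-2A''$ an \emph{intersection of two Bohr sets} $B(\Gamma,\delta)\cap B(\Lambda,\epsilon)$ with deliberately asymmetric parameters: $|\Gamma|=O(K^{1/2}\log K)$ with an exponentially small width $\log\delta^{-1}=O(K^{1/4}\log^2K)$, and $|\Lambda|=O(K^{3/4}\log K)$ with width $\log\epsilon^{-1}=O(\log K)$. Passing to a progression (Proposition \ref{prop.largeprogressionsinbohrsets}) gives dimension $d=O(K^{3/4}\log K)$ and size $\eta|A|$ with $\log\eta^{-1}$ dominated by $|\Gamma|\log\delta^{-1}=O(K^{3/4}\log^3 K)$. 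Only then does Chang's pullback (Proposition \ref{prop.changspullback}) apply, yielding dimension $O(d+K\log(K\eta^{-1}))=O(K^{7/4}\log^3K)$. In your write-up the dominant term $K\log\eta^{-1}$ is silently dropped -- that is exactly the term the paper's asymmetric Bohr-set structure is engineered to control, and it is where the improvement from $2$ to $7/4$ actually lives.
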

The previous best estimates are due to Chang \cite{MCC} who showed the above result (up to logarithmic factors) with $2$ in place of $7/4$. Note that one cannot hope to improve the dimension bound past $\lfloor K-1\rfloor$, or the exponent of $K$ in the size bound below $1$; at the end of \cite{MCC} Chang (using arguments of
Bilu \cite{YB}) actually shows how to bootstrap the dimension bound
to $\lfloor K-1\rfloor$ for a small cost in the size bound. See the notes
\cite{BJGNOPACG} of Green for an exposition of this argument.

The second result we shall show is an improvement of a theorem of Konyagin and
{\L}aba from \cite{SVKIL}. For $\alpha \in
\R$ and $A \subset \R$ we write $\alpha.A:=\{\alpha a: a \in
A\}$.
\begin{theorem}\label{thm.konyaginlaba}
Suppose that $A \subset \R$ is a finite set and $\alpha \in \R$ is transcendental. Then
\begin{equation*}
 |A + \alpha.A| \gg \frac{(\log |A|)^{4/3}}{(\log \log |A|)^{8/3}} |A|.
\end{equation*}
\end{theorem}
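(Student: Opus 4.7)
The plan is to refine the argument of Konyagin and {\L}aba in \cite{SVKIL} by substituting Theorem \ref{thm.improvedfreiman} for the weaker Freiman-type theorem used there. Write $K = |A+\alpha.A|/|A|$ and suppose $K$ is small. Since $|\alpha.A|=|A|$, Ruzsa's triangle inequality gives $|A+A|\leq K^{O(1)}|A|$; iterating the triangle inequality in the form $|X+Y+Z|\leq |X+Y|\cdot |Y+Z|/|Y|$, applied successively to $Y = \alpha^k.A$ and $Z=\alpha^{k+1}.A$, yields, for every integer $r\geq 1$,
\[
\Bigl|\sum_{i=0}^{r-1}\alpha^i.A\Bigr|\leq K^{O(r)}|A|.
\]
Applying Theorem \ref{thm.improvedfreiman} to $A$, we obtain a proper multidimensional progression $P\supset A$ of dimension $d = O(K^{7/4}\log^3 K)$ and volume $|P|\leq \exp(O(d))|A|$. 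After a routine sub-progression refinement (at the cost of polylogarithmic losses absorbed into the final $(\log\log|A|)$ factor), we may further assume the generators $v_1,\ldots,v_d$ of $P$ are $\Q$-linearly independent.

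The core of the argument is now the transcendence of $\alpha$. Since $\alpha$ is transcendental over $\Q$ and the $v_j$ are $\Q$-linearly independent, for each $r\geq 1$ the $rd$ elements $\{\alpha^i v_j: 0\leq i < r,\ 1\leq j\leq d\}$ are $\Q$-linearly independent in $\R$ as well, so the ambient GAP $\sum_{i=0}^{r-1}\alpha^i.P$ is proper of dimension $rd$. A density-type computation, relating $|A|$ to $|P|$ and exploiting this proper structure, then produces a lower bound of the form
\[
\Bigl|\sum_{i=0}^{r-1}\alpha^i.A\Bigr|\gtrsim |A|^{1+\Omega(r/d)}
\]
for $r$ in a suitable range. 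Combining this with the Plünnecke--Ruzsa upper bound and optimizing the parameter $r$ gives an inequality between $\log K$, $\log|A|$ and $d$; substituting $d\leq CK^{7/4}\log^3 K$ and solving iteratively then produces the claimed lower bound on $K$.

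The principal obstacle lies in the transcendence step: the lower bound on the iterated sumset must be quantitatively sharp enough---in the correct power of $|A|$, and with only mild polylogarithmic losses---to match the exponent $7/4$ from Theorem \ref{thm.improvedfreiman} and deliver the final exponent $4/3$. A secondary technical point is maintaining $\Q$-linear independence of the generators of $P$ without inflating its volume; the combined polylogarithmic losses from this refinement and from the $\log^3 K$ factor in Theorem \ref{thm.improvedfreiman} account for the $(\log\log|A|)^{8/3}$ denominator in the stated bound.
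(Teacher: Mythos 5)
Your proposal is not a refinement of the paper's argument but a different, and unfortunately weaker, one, and it contains a genuine error.

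The central problem is that you invoke Theorem \ref{thm.improvedfreiman} (a Fre{\u\i}man theorem: a GAP of dimension $O(K^{7/4}\log^3K)$ \emph{containing} $A$) where the paper uses Theorem \ref{thm.mainrefinement} (a Bogolio\`uboff-type statement: a Bohr set, and hence a long arithmetic progression, \emph{inside} $2A-2A$). For the Konyagin--{\L}aba problem one only needs a long AP inside the doubled difference set, and that is precisely the structure available at the cheaper dimension cost of $O(K^{3/4}\log K)$. Tracking the exponents through your own scheme: if $d=O(K^{7/4}\log^3K)$ and one combines the lower bound $|\sum_{i<r}\alpha^i.A|\gtrsim |A|^{1+\Omega(r/d)}$ with the upper bound $K^{O(r)}|A|$, one gets $\log|A|\ll d\log K = O(K^{7/4}\log^4K)$, hence $K\gg (\log|A|)^{4/7}(\log\log|A|)^{-16/7}$. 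The exponent $4/7<1$ is weaker even than the original Konyagin--{\L}aba bound, let alone the claimed $4/3$. The paper instead finds an AP of length $|A|^{1/O(K^{3/4}\log K)}$ inside $(A-A)+\alpha.(A-A)$, so the analogous computation gives $\log|A|\ll K^{3/4}\log^2K$, which is exactly what produces $4/3$.

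There is also a flaw in the transcendence step. You claim that if $\alpha$ is transcendental and $v_1,\dots,v_d$ are $\Q$-linearly independent reals then $\{\alpha^iv_j: 0\leq i<r,\ 1\leq j\leq d\}$ is $\Q$-linearly independent. This is false: take $v_1=1$, $v_2=\alpha$; these are $\Q$-independent, yet $\alpha v_1=v_2$. Transcendence of $\alpha$ over $\Q$ gives no control over $\Q$-linear relations among the $\alpha^iv_j$ unless $\alpha$ is transcendental over $\Q(v_1,\dots,v_d)$ -- and since $P\supset A$ and $A$, $\alpha.A$ are intimately related, the generators $v_j$ may well involve $\alpha$. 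The paper's argument avoids this entirely by producing a genuine one-dimensional AP with a single common difference $d$; the coefficients in $\sum_i\alpha^{2i}k_id$ are then integers, and the transcendence of $\alpha$ alone kills collisions, giving $|P|^l$ directly. (Relatedly, the ``routine sub-progression refinement'' producing $\Q$-independent generators at polylogarithmic cost is not routine, and in the $\Z$-valued setting where Theorem \ref{thm.improvedfreiman} actually lives it cannot even be formulated for $d>1$.)

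Finally, the paper's argument also needs a passage from $A\subset\R$ to a cyclic group before Theorem \ref{thm.mainrefinement} can be applied, and this is the content of Proposition \ref{prop.ruzsamodelbooster} together with an embedding of $A\cup\alpha.A$ into $\Z^d$ and then into $\Z$; your sketch omits this entirely. The iterated sumset bound you write down is essentially Lemma \ref{lem.iteratedgrowth}, which is correct, but note that a direct iterated Ruzsa triangle inequality as you describe gives $K^{O(r^2)}$, not $K^{O(r)}$; the linear-in-$r$ exponent requires the covering argument in the paper's proof of that lemma.
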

In \cite{SVKIL} the above result was shown with a $1$ in place of $4/3$ -- again, up to factors of $\log \log |A|$ -- and it was observed that for any transcendental $\alpha$ and positive integer $N$ there is a simple construction of a set $A$ with $|A|=N$ and $|A+\alpha.A| = \exp(O(\sqrt{\log |A|}))|A|$.

The improvements in both Theorem \ref{thm.improvedfreiman} and
Theorem \ref{thm.konyaginlaba} stem from the following result, the
proof of which is the content of this appendix. It will be
appropriate for us to consider Bohr sets in $\Z/N\Z$
rather than the generalizations presented in the paper: if $\Gamma$ is a set of characters on $\Z/N\Z$ and $\delta \in (0,1]$, then we write
\begin{equation*}
B(\Gamma,\delta):=\{x \in \Z/N\Z: \|\gamma(x)\| \leq \delta \textrm{ for all } \gamma \in \Gamma\}.
\end{equation*}
\begin{theorem}\label{thm.mainrefinement}
Suppose that $A,B \subset \Z/N\Z$ have $|A+B| \leq K|B|$ and $A$ has density
$\alpha$. Then $(A-A)+(B-B)$ contains $B(\Gamma,\delta) \cap
B(\Lambda,\epsilon)$, where
\begin{equation*}
|\Gamma| =O(K^{1/2}\log \alpha^{-1}) \textrm{ and } \log \delta^{-1}
=O(K^{1/4}\log \alpha^{-1}\log (K\alpha^{-1}))
\end{equation*}
and
\begin{equation*}
|\Lambda| = O(K^{3/4}\log \alpha^{-1}) \textrm{ and } \log
\epsilon^{-1} = O(\log (K\log \alpha^{-1})).
\end{equation*}
\end{theorem}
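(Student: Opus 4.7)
The plan is to show that $1_A*1_{-A}*1_B*1_{-B}(x)>0$ for every $x\in B(\Gamma,\delta)\cap B(\Lambda,\epsilon)$; since this convolution is supported in $(A-A)+(B-B)$, the stated inclusion will follow. Fourier inversion gives
\begin{equation*}
N\cdot 1_A*1_{-A}*1_B*1_{-B}(x)=\sum_\gamma|\widehat{1_A}(\gamma)|^2|\widehat{1_B}(\gamma)|^2\gamma(x),
\end{equation*}
with the main ($\gamma=0$) term contributing $|A|^2|B|^2$. Three preliminary inputs are needed throughout: (a)~Pl\"unnecke--Ruzsa together with $|A|\leq|A+B|\leq K|B|$ gives $\beta:=|B|/N\geq\alpha/K$; (b)~Cauchy--Schwarz yields $E(A,B)\geq|A|^2|B|/K$ and hence $\sum|\widehat{1_A}|^2|\widehat{1_B}|^2=NE(A,B)\geq N|A|^2|B|/K$; (c)~Parseval gives $\sum|\widehat{1_B}|^2=N|B|$ and $\sum|\widehat{1_A}|^2=N|A|$.

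The main step will be a double application of Chang's theorem. I would take $\eta_1\sim K^{-1/4}$ and $\eta_2\sim K^{-3/8}$, producing dissociated sets $\Gamma$ of size $|\Gamma|=O(\eta_1^{-2}\log\alpha^{-1})=O(K^{1/2}\log\alpha^{-1})$ and $\Lambda$ of size $|\Lambda|=O(\eta_2^{-2}\log\alpha^{-1})=O(K^{3/4}\log\alpha^{-1})$ whose $\{-1,0,1\}$-spans contain the $\eta_1$- and $\eta_2$-large spectra of $A$ respectively. Partition the sum over $\gamma\neq 0$ into three regions---(i)~the $\eta_1$-spectrum of $A$; (ii)~the $\eta_2$-spectrum minus the $\eta_1$-spectrum; (iii)~the complement of the $\eta_2$-spectrum---and bound each against a fraction of the main term $E(A,B)\geq|A|^2|B|/K$. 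On (i) use $|1-\gamma(x)|\leq 2\pi|\Gamma|\delta$; on (ii) use $|1-\gamma(x)|\leq 2\pi|\Lambda|\epsilon$ combined with $|\widehat{1_A}(\gamma)|<\eta_1|A|$; on (iii) use $|\widehat{1_A}(\gamma)|<\eta_2|A|$. Balancing the first two pieces yields the radius constraints $\delta\lesssim 1/(K|\Gamma|)$ and $\epsilon\lesssim 1/(K^{1/2}|\Lambda|)$, both comfortably within the stated bounds on $\log\delta^{-1}$ and $\log\epsilon^{-1}$.

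The main obstacle will be region (iii). The crude Parseval estimate
\begin{equation*}
\sum_{\gamma\notin L_{\eta_2}(A)}|\widehat{1_A}(\gamma)|^2|\widehat{1_B}(\gamma)|^2\leq\eta_2^2|A|^2\sum_\gamma|\widehat{1_B}(\gamma)|^2=\eta_2^2N|A|^2|B|
\end{equation*}
only dominates the main term when $\eta_2\lesssim K^{-1/2}$, forcing $|\Lambda|\sim K\log\alpha^{-1}$---a factor $K^{1/4}$ too large. To recover this $K^{1/4}$ saving I plan to introduce a parallel Chang decomposition of an $\eta_3$-large spectrum of $B$ (chosen small enough that the corresponding dissociated set can be absorbed into $\Lambda$, using the bound $\log\beta^{-1}\leq\log(K\alpha^{-1})$ from input (a)), restrict (iii) to the joint complement of the two large spectra, and there exploit the pointwise factorisation $|\widehat{1_A}|^2|\widehat{1_B}|^2<\eta_2\eta_3|A||B||\widehat{1_A}||\widehat{1_B}|$ followed by the Cauchy--Schwarz bound $\sum|\widehat{1_A}||\widehat{1_B}|\leq N\sqrt{|A||B|}$ together with $\beta\geq\alpha/K$ to obtain the required saving. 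The freedom to choose $\delta$ exponentially small in $K^{1/4}$ (as the statement allows) is what absorbs the residual cross-terms between the large spectra of $A$ and $B$ that arise from this finer decomposition.
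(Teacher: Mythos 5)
Your proposal is a single-shot Chang-type spectral argument, whereas the paper's proof is iterative, built on a new density-increment technology inside Bohr sets; these are fundamentally different routes, and it is precisely the failure of the single-shot approach to go below dimension $O(K\log\alpha^{-1})$ that motivates the iteration.

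You correctly flag region (iii) as the crux, but the proposed ``double Chang'' fix does not close. In the joint complement of $\mathrm{Spec}_{\eta_2}(A)$ and $\mathrm{Spec}_{\eta_3}(B)$, your factorisation and Cauchy--Schwarz give
\begin{equation*}
\sum|\wh{1_A}(\gamma)|^2|\wh{1_B}(\gamma)|^2 < \eta_2\eta_3|A||B|\cdot N\sqrt{|A||B|},
\end{equation*}
and comparing against the main term $N E(A,B)\geq N|A|^2|B|/K$ forces $\eta_2\eta_3\lesssim K^{-1}\sqrt{\alpha/\beta}$. On the other hand, for the two Chang dissociated sets to fit inside a $\Lambda$ of size $O(K^{3/4}\log\alpha^{-1})$ you need $\eta_2,\eta_3\gtrsim K^{-3/8}$ (up to logarithms), hence $\eta_2\eta_3\gtrsim K^{-3/4}$. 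These two constraints are compatible only when $\beta\lesssim\alpha K^{-1/2}$, which fails in general; already in the representative case $B=A$, where $\beta=\alpha$, they contradict one another for large $K$. No choice of $\delta$ can rescue this, since $\delta$ controls the Bohr radius on region (i), not the small-spectrum tail, and no purely one-shot Parseval/H\"older refinement recovers the missing factor of $K^{1/4}$. The paper avoids the obstruction by iterating a density increment (Proposition \ref{prop.densityincrement} fed into Lemma \ref{lem.iterationlemma}): a large locally dissociated set of characters with large Fourier coefficient is used, via a Riesz-product argument, to produce a \emph{simultaneous} density increment that adds only $O(K^{1/4})$ new frequencies to $\Gamma$ per step while multiplying the density by $1+\Omega(K^{-1/4})$; after $O(K^{1/4}\log\alpha^{-1})$ steps one either saturates the density or arrives at a step where the dissociated set $\Lambda$ is already of size $O(K^{3/4}\log\alpha^{-1})$, and then a Bogoliubov-style argument closes the case. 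This iterated structure is the idea your proposal is missing, and it is where the exponent $3/4$ actually comes from.
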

The appendix now splits into five sections. In \S\ref{sec.bohrsetsanddissociativity} we recall
the basic facts about Bohr sets and dissociativity which we shall need; in
\S\ref{sec.densityincrement} we detail the key new density increment developed in the paper,
before completing the proof of Theorem \ref{thm.mainrefinement} in
\S\ref{sec.proofofmaintheorem}. Finally, in \S\ref{sec.freimanimprovement} we prove Theorem \ref{thm.improvedfreiman} and in \S\ref{sec.konyaginlaba} we prove Theorem \ref{thm.konyaginlaba}.

\section{Bohr sets and dissociativity}\label{sec.bohrsetsanddissociativity}

We say that a Bohr set
$B(\Gamma,\delta)$ is \emph{regular} if
\begin{equation*}
1-2^4|\Gamma||\eta| \leq
\frac{|B(\Gamma,(1+\eta)\delta)|}{|B(\Gamma,\delta)|}\leq 1+
2^4|\Gamma||\eta|\textrm{ whenever }|\Gamma||\eta| \leq 2^{-4}.
\end{equation*}
Typically Bohr sets are regular, a fact implicit in the proof of the following proposition, which may be found, for example, as Lemma 4.25 fo \cite{TCTVHV}.
\begin{proposition}\label{prop.regularvalue}
Suppose that $B(\Gamma,\delta)$ is a Bohr set. Then there is a
$\delta'$ with $\delta \leq \delta' < 2 \delta$ such that
$B(\Gamma,\delta')$ is regular.
\end{proposition}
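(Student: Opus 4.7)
The plan is a pigeonhole / Vitali covering argument applied to the monotone function $t \mapsto \log |B(\Gamma,t)|$, exploiting that Bohr balls grow by only a bounded factor when their radius is doubled.

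First I would establish the auxiliary growth estimate $|B(\Gamma,2\delta)| \leq C^{|\Gamma|}|B(\Gamma,\delta)|$ for some absolute constant $C$. This is a standard covering argument: a maximal $\delta$-separated subset of $B(\Gamma,2\delta)$ gives disjoint translates of $B(\Gamma,\delta/2)$ which can be counted by mapping into $(\R/\Z)^{|\Gamma|}$ via $x\mapsto (\gamma(x))_{\gamma\in\Gamma}$ and comparing volumes, while the same separated set, now translating $B(\Gamma,\delta)$, covers $B(\Gamma,2\delta)$. The outcome is that $\log|B(\Gamma,2\delta)| - \log|B(\Gamma,\delta)| = O(|\Gamma|)$.

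Now suppose for contradiction that no $\delta' \in [\delta,2\delta)$ yields a regular Bohr set. Then for each such $\delta'$ there is some $\eta = \eta_{\delta'}$ with $|\Gamma||\eta| \leq 2^{-4}$ for which one of the two bounds in the regularity definition fails. Monotonicity of $t \mapsto |B(\Gamma,t)|$ rules out the lower bound when $\eta > 0$ and the upper bound when $\eta < 0$, so in either case one extracts an interval $I_{\delta'}$ of length comparable to $|\eta|\delta'$ (namely $[\delta',(1+\eta)\delta']$ or $[(1+\eta)\delta',\delta']$ according to sign) on which $\log|B(\Gamma,\cdot)|$ increases by at least $c\cdot 2^4|\Gamma||\eta|$ for an absolute $c>0$, using $\log(1+x) \geq x/2$ and $-\log(1-x) \geq x$ on $[0,1]$ (both applicable because $2^4|\Gamma||\eta| \leq 1$).

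Applying the Vitali covering lemma to the family $\{I_{\delta'}\}$, which covers $[\delta,2\delta)$, extracts a disjoint subfamily whose union has measure at least a fixed fraction of $[\delta,2\delta)$. Summing the growth of $\log|B(\Gamma,\cdot)|$ across these disjoint bad intervals produces a lower bound of the form $c'|\Gamma|$ where $c'$ can be arranged to be strictly larger than the constant produced by the doubling estimate of the first step. This contradicts monotonicity, so a regular $\delta'$ must exist. The main obstacle is not conceptual but numerical bookkeeping: one has to choose the factor in the definition of regularity, the constant in the doubling bound, and the Vitali fraction so that the final inequality strictly contradicts rather than merely matches. A minor secondary point is that negative $\eta_{\delta'}$ push $I_{\delta'}$ to the left of $\delta'$, so the argument is formally carried out on the slightly enlarged interval $[\delta(1-2^{-4}),2\delta(1+2^{-4})]$, on which the doubling bound still applies.
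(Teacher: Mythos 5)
Your plan — establish a doubling bound for Bohr sets, suppose every $\delta' \in [\delta,2\delta)$ fails to be regular, extract for each $\delta'$ a short "bad" interval across which $\log|B(\Gamma,\cdot)|$ grows with slope $\gtrsim 2^4|\Gamma|/\delta'$, and then run a Vitali covering argument to contradict the doubling bound — is the standard proof, and it is presumably what appears in the Tao--Vu reference the paper cites, so the approach matches. Your case analysis (monotonicity kills the lower bound for $\eta>0$ and the upper bound for $\eta<0$) is also correct.

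However, there is a genuine numerical problem that you can neither wave away as bookkeeping nor fix by ``choosing the factor in the definition of regularity'': that factor is fixed at $2^4$ by the paper's definition. With the constants you actually state, the argument does not close. Using $\log(1+x)\geq x/2$, each bad interval contributes slope at least $\tfrac{1}{2}\cdot 2^4|\Gamma|/\delta' \geq 4|\Gamma|/\delta$, and the generic Vitali $\tfrac13$-fraction over the interval $[\delta,2\delta)$ gives total growth at least $\tfrac{4}{3}|\Gamma|$. But the bad intervals spill out to roughly $[\tfrac{15}{16}\delta,\tfrac{17}{8}\delta]$ (ratio $34/15$), and the arc-partition covering bound for Bohr sets only forces $\log|B(\Gamma,\tfrac{17}{8}\delta)|-\log|B(\Gamma,\tfrac{15}{16}\delta)| \leq |\Gamma|\log 6 \approx 1.79|\Gamma|$. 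Since $4/3 < \log 6$, there is no contradiction. You must sharpen at least one step: replace $\log(1+x)\geq x/2$ by the sharp $\log(1+x)\geq x\log 2$ on $[0,1]$, or invoke the one-dimensional Vitali lemma with the better $\tfrac12$-fraction (available because intervals on a line can be split into two disjoint subfamilies after an initial selection step). Either makes the numbers work, but the margin is small and must be checked, not asserted. A secondary, more minor point: your sketch of the doubling bound via "disjoint translates counted by comparing volumes in $(\R/\Z)^{|\Gamma|}$" is not quite right — the evaluation map is a group homomorphism, not measure-preserving, and the image need not be a nice set. The standard argument instead partitions $\Z/N\Z$ into cells by cutting each coordinate $\R/\Z$ into arcs of length $\delta$, notes that $B(\Gamma,r\delta)$ meets at most $(\lceil 2r\rceil+1)^{|\Gamma|}$ cells, and that each cell intersected with the big Bohr set lies in a translate of $B(\Gamma,\delta)$.
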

There is a natural candidate for `approximate Haar
measure' on $B(\Gamma,\delta)$: we write $\beta_{\Gamma,\delta}$ for
the unique uniform probability measure on $B(\Gamma,\delta)$. Having identified such a measure there are various possible formulations of the `approximate annihilator' of a Bohr set and the following lemma helps us pass between them.
\begin{lemma}\label{lem.nestingofdual}
Suppose that $B(\Gamma,\delta)$ is a regular Bohr set and $\kappa>0$
is a parameter. Then
\begin{equation*}
\{\gamma:|\wh{\beta_{\Gamma,\delta}}(\gamma)| \geq \kappa\} \subset
\{\gamma:|1-\gamma(x)| \leq 2^5|\Gamma|\kappa^{-1}\delta'\delta^{-1}
\textrm{ for all } x \in B(\Gamma,\delta')\}.
\end{equation*}
\end{lemma}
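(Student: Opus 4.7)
The plan is to translate the uniform probability measure on $B(\Gamma,\delta)$ by a small element $y \in B(\Gamma,\delta')$ and exploit regularity to force the character $\gamma$ to be close to $1$ there, whenever the Fourier coefficient $\wh{\beta_{\Gamma,\delta}}(\gamma)$ is large. Writing $\tau_y \mu$ for the translate of a measure $\mu$ by $y$, the elementary identity $\wh{\tau_y\beta_{\Gamma,\delta}}(\gamma) = \overline{\gamma(y)}\,\wh{\beta_{\Gamma,\delta}}(\gamma)$ gives
\begin{equation*}
(1-\overline{\gamma(y)})\wh{\beta_{\Gamma,\delta}}(\gamma) = \wh{\beta_{\Gamma,\delta}-\tau_y\beta_{\Gamma,\delta}}(\gamma),
\end{equation*}
so taking absolute values and dominating the Fourier coefficient by the $\ell^1$-norm,
\begin{equation*}
|1-\gamma(y)|\,|\wh{\beta_{\Gamma,\delta}}(\gamma)| \leq \|\beta_{\Gamma,\delta}-\tau_y\beta_{\Gamma,\delta}\|_1 = \frac{|B(\Gamma,\delta) \triangle (B(\Gamma,\delta)+y)|}{|B(\Gamma,\delta)|}.
\end{equation*}

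The task thus reduces to controlling the symmetric difference on the right when $y \in B(\Gamma,\delta')$. I would observe that if $x \in B(\Gamma,\delta)$ but $x-y \notin B(\Gamma,\delta)$, then for some $\gamma_0 \in \Gamma$,
\begin{equation*}
\delta < \|\gamma_0(x-y)\| \leq \|\gamma_0(x)\| + \|\gamma_0(y)\| \leq \delta+\delta',
\end{equation*}
placing $x-y$ in the annulus $B(\Gamma,\delta+\delta') \setminus B(\Gamma,\delta)$; the other half of the symmetric difference is bounded analogously by translation. Hence $|B(\Gamma,\delta) \triangle (B(\Gamma,\delta)+y)| \leq 2|B(\Gamma,\delta+\delta') \setminus B(\Gamma,\delta)|$. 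Assuming for the moment that $|\Gamma|\delta'/\delta \leq 2^{-4}$, regularity of $B(\Gamma,\delta)$ applied with $\eta = \delta'/\delta$ bounds this annulus by $2^4 |\Gamma|(\delta'/\delta) |B(\Gamma,\delta)|$. Combining everything and dividing by $\kappa \leq |\wh{\beta_{\Gamma,\delta}}(\gamma)|$ gives precisely the claimed inequality with constant $2^5$.

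I do not anticipate any serious obstacle here: the translation identity is formal, the annulus containment is a three-line triangle-inequality computation, and the definition of regularity is quantitatively tailored to the final bound we need. The only point requiring a little care is the regime $|\Gamma|\delta'/\delta > 2^{-4}$ in which regularity does not apply directly, but there the right-hand side of the desired inclusion exceeds $2$, so the conclusion is vacuous as $|1-\gamma(x)| \leq 2$ for every $x$ and $\gamma$. Thus the dichotomy closes the argument.
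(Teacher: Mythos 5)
Your argument is correct. The paper actually states this lemma without proof, so there is nothing to compare against directly, but what you have written is the standard argument and it is sound: the translation identity $\wh{\tau_y\mu}(\gamma)=\overline{\gamma(y)}\wh\mu(\gamma)$ reduces matters to the $L^1$-distance between $\beta_{\Gamma,\delta}$ and its translate; the triangle inequality for $\|\cdot\|_{\R/\Z}$ places the symmetric difference inside twice the annulus $B(\Gamma,\delta+\delta')\setminus B(\Gamma,\delta)$; regularity with $\eta=\delta'/\delta$ then gives the factor $2^4|\Gamma|\delta'\delta^{-1}$, and multiplying by $2$ and dividing by $\kappa$ produces exactly $2^5|\Gamma|\kappa^{-1}\delta'\delta^{-1}$. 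Your final observation that the conclusion is vacuous when $|\Gamma|\delta'/\delta>2^{-4}$ (since then the bound exceeds $2\geq|1-\gamma(x)|$ once one notes $\kappa\leq 1$, the left-hand set being empty otherwise) properly closes the remaining case.
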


Dissociativity is an important concept and for us and we shall require a
local analogue, but first we need some notation. If $\Lambda$ is a
set of characters and $m:\Lambda \rightarrow \{-1,0,1\}$ then we
write
\begin{equation*}
  m.\Lambda:=\sum_{\lambda \in \Lambda}{m_\lambda.\lambda} \textrm{
  and } \langle \Lambda \rangle :=\{m.\Lambda: m \in
  \{-1,0,1\}^\Lambda\}.
\end{equation*}
If $S$ is a symmetric neighborhood of the trivial character then we say that
a set of characters $\Lambda$ is \emph{$S$-dissociated} if
\begin{equation*}
m \in \{-1,0,1\}^\Lambda \textrm{ and } m.\Lambda \in S \textrm{
implies that } m \equiv 0.
\end{equation*}
The usual notion of dissociativity corresponds to taking
$S$ equal to the set containing just the trivial character, and typically for us $S$ will be a set of the form
$\{\gamma:|\wh{\beta_{\Gamma,\delta}}(\gamma)| \geq \kappa\}$; the following
lemma is the tool by which we make use of this notion.
\begin{lemma}\label{lem.containedinintersection}
Suppose that $B(\Gamma,\delta)$ is a regular Bohr set, $\mathcal{L}$
is a set of characters and $\Lambda$ is a maximal
$S:=\{\gamma:|\wh{\beta_{\Gamma,\delta}}(\gamma)| \geq
\kappa\}$-dissociated subset of characters. Then $\mathcal{L}$ is
contained in
\begin{equation*}
 \{\gamma:|1-\gamma(x)| \leq 2^5|\Gamma|\kappa^{-1}\delta'\delta^{-1} + 2^4|\Lambda|\epsilon \textrm{ for all } x \in
  B(\Gamma,\delta')\cap B(\Lambda,\epsilon)\}.
\end{equation*}
\end{lemma}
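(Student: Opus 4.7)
The plan is to fix an arbitrary $\gamma \in \mathcal{L}$ and use the maximality of $\Lambda$ to write $\gamma$ as the difference of an element of $S$ and an element of $\langle\Lambda\rangle$, then control each piece on its respective Bohr set. If $\gamma \in \Lambda$ the conclusion is immediate from $|1-\gamma(x)|\leq 2\pi\|\gamma(x)\|\leq 2\pi\epsilon\leq 2^4|\Lambda|\epsilon$ whenever $x \in B(\Lambda,\epsilon)$. Otherwise $\Lambda\cup\{\gamma\}$ cannot be $S$-dissociated, so there is a non-zero $m\in\{-1,0,1\}^{\Lambda\cup\{\gamma\}}$ with $\eta:=m.(\Lambda\cup\{\gamma\})\in S$. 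Since $\Lambda$ itself is $S$-dissociated we must have $m_\gamma\neq 0$, and after possibly negating $m$ we may assume $m_\gamma=1$. Setting $m':=m|_\Lambda$, this rearranges to
\begin{equation*}
\gamma = \eta - m'.\Lambda \quad \text{with } \eta\in S,\; m'\in\{-1,0,1\}^\Lambda.
\end{equation*}

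Next I would bound each piece separately. Since $\eta\in S$, Lemma \ref{lem.nestingofdual} gives
\begin{equation*}
|1-\eta(x)| \leq 2^5|\Gamma|\kappa^{-1}\delta'\delta^{-1}
\end{equation*}
for every $x\in B(\Gamma,\delta')$. For the character $m'.\Lambda$, if $x\in B(\Lambda,\epsilon)$ then the triangle inequality in the $\|\cdot\|$-norm gives
\begin{equation*}
\|(m'.\Lambda)(x)\| \leq \sum_{\lambda\in\Lambda}|m'_\lambda|\cdot\|\lambda(x)\| \leq |\Lambda|\epsilon,
\end{equation*}
and hence $|1-(m'.\Lambda)(x)|\leq 2\pi|\Lambda|\epsilon\leq 2^4|\Lambda|\epsilon$ by the standard estimate $|1-e^{2\pi it}|\leq 2\pi\|t\|$.

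Finally I would splice these together via the factorisation
\begin{equation*}
1-\gamma(x) = (1-\eta(x)) + \eta(x)\bigl(1-\overline{(m'.\Lambda)(x)}\bigr),
\end{equation*}
which, together with $|\eta(x)|=1$ and the triangle inequality, yields the desired bound on $B(\Gamma,\delta')\cap B(\Lambda,\epsilon)$. The only non-mechanical step is the maximality argument producing the representation $\gamma=\eta-m'.\Lambda$, and this is forced almost immediately by the definition of $S$-dissociativity; the rest is a routine combination of Lemma \ref{lem.nestingofdual} with elementary inequalities, so I do not anticipate any serious obstacle.
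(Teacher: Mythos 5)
Your proof is correct and follows essentially the same route as the paper: the paper first establishes the containment $\mathcal{L}\subset\langle\Lambda\rangle+S$ via the same maximality argument and then bounds the $\langle\Lambda\rangle$-part and the $S$-part separately before combining with the triangle inequality, which is exactly what you do, merely phrased as a direct construction for a fixed $\gamma$ rather than as an intermediate set containment. The explicit handling of the case $\gamma\in\Lambda$ and the factorisation $1-\gamma(x)=(1-\eta(x))+\eta(x)\bigl(1-\overline{(m'.\Lambda)(x)}\bigr)$ are harmless cosmetic choices that don't change the substance.
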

\begin{proof}
We begin by proving that $\mathcal{L} \subset \langle \Lambda
\rangle +S$. Suppose (for a contradiction) that there is some
character $\gamma \in \mathcal{L} \setminus (\langle \Lambda \rangle
+S)$. Let $\Lambda':=\Lambda \cup \{\gamma\}$ which is a strict
superset of $\Lambda$. We shall show that $\Lambda'$ is dissociated
contradicting the maximality of $\Lambda$. Suppose that $m \in
\{-1,0,1\}^{\Lambda'}$ is such that $m.\Lambda' \in S$. We have
three cases.
\begin{enumerate}
  \item $m_{\gamma}=0$ in which case $m|_{\Lambda}.\Lambda \in S$
  and so $m|_{\Lambda}\equiv 0$ by $S$-dissociativity of $\Lambda$. It
  follows that $m \equiv 0$. \item $m_{\gamma}=1$ in which case $\gamma \in - m|_{\Lambda}.\Lambda +S\subset \langle \Lambda \rangle +S$ which contradicts the fact that $\gamma \in \mathcal{L}
\setminus (\langle \Lambda \rangle +S)$. \item $m_{\gamma}=-1$ in
which case $\gamma \in m|_{\Lambda}.\Lambda +S\subset \langle
\Lambda
  \rangle +S$ which contradicts the fact that $\gamma \in \mathcal{L}
\setminus (\langle \Lambda \rangle +S)$.
\end{enumerate}
Thus $m \equiv 0$ and $\Lambda'$ is $S$-dissociated as claimed. This
contradiction proves that $\mathcal{L} \subset \langle \Lambda
\rangle +S$.

By Lemma \ref{lem.nestingofdual} we have
\begin{equation*}
S \subset \{\gamma: |1-\gamma(x)| \leq
2^5|\Gamma|\kappa^{-1}\delta'\delta^{-1} \textrm{ for all } x \in
B(\Gamma,\delta')\}.
\end{equation*}
Now if $\gamma \in \langle \Lambda \rangle$ then $\gamma =
\sum_{\lambda \in \Lambda}{m_\lambda.\lambda}$ so
\begin{eqnarray*}
|1-\gamma(x)| \leq \sum_{\lambda \in \Lambda}{|1-\lambda(x)|}
 &= &\sum_{\lambda \in
 \Lambda}{\sqrt{2(1-\cos(4\pi\|\lambda(x)\|))}}\\ &
\leq & \sum_{\lambda \in \Lambda}{4\pi\|\lambda(x)\|} \leq
2^4|\Lambda| \sup_{\lambda \in \Lambda}{\|\lambda(x)\|}.
\end{eqnarray*}
It follows that
\begin{equation*}
\langle \Lambda \rangle \subset \{\gamma: |1-\gamma(x)|\leq
2^4|\Lambda|\epsilon \textrm{ for all } x \in B(\Lambda,\epsilon)\}.
\end{equation*}
The result follows from the triangle inequality.
\end{proof}

\section{The density increment}\label{sec.densityincrement}

The objective of this section is to prove the following proposition
which reflects the main innovation of the paper. The key idea
of the proposition is that if we have a large number of highly
independent characters at which $\wh{1_A}$ is large then they
induce `orthogonal' density increments which can consequently be
done simultaneously resulting in a more favourable width
reduction to density increment ratio in our Bohr sets.
\begin{proposition}\label{prop.densityincrement}
Suppose that $B(\Gamma,\delta)$ is a regular Bohr set, $A \subset
B(\Gamma,\delta)$ has relative density $\alpha$ and $\rho \in (0,1]$
is a parameter. Suppose, further, that there is a Bohr set
$B(\Gamma,\delta')$ with
\begin{equation*}
2\left(\frac{\rho\alpha }{2(1+|\Gamma|)}\right)^{2^6}\delta  \leq
\delta' \leq 4\left(\frac{\rho\alpha
}{2(1+|\Gamma|)}\right)^{2^6}\delta,
\end{equation*}
and a $\{\gamma:|\wh{\beta_{\Gamma,\delta'}}(\gamma)| \geq
1/3\}$-dissociated set $\Lambda$ of at least $2^7\rho^{-1} (1+\log
\alpha^{-1})$ characters such that for each $\lambda \in \Lambda$ we
have $|\wh{1_A d\beta_{\Gamma,\delta}}(\lambda)| \geq \rho \alpha$.
Then there is a regular Bohr set $B(\Gamma',\delta'')$ with
\begin{equation*}
\delta'' \geq \left(\frac{\rho\alpha
}{2(1+|\Gamma|)}\right)^{2^6}\delta \textrm{ and } |\Gamma'| \leq |\Gamma| + \rho |\Lambda|/2^4(1+\log \alpha^{-1})
\end{equation*}
such that
\begin{equation*}
\|1_A \ast \beta_{\Gamma',\delta''}\|_\infty \geq
\alpha\left(1+\frac{\rho^2 |\Lambda|}{2^{12}(1+\log
\alpha^{-1})}\right).
\end{equation*}
\end{proposition}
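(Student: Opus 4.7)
The strategy is to enlarge the Bohr set by appending a carefully chosen subset $\Lambda_0 \subset \Lambda$ of size roughly $\rho|\Lambda|/(1+\log\alpha^{-1})$ to the generators $\Gamma$, and then to derive the density increment via an $L^2$--Fourier computation that exploits the $S$-dissociativity of $\Lambda$ to control off-diagonal terms.

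First I would fix a subset $\Lambda_0 \subset \Lambda$ with $|\Lambda_0| = \lfloor \rho|\Lambda|/2^4(1+\log \alpha^{-1})\rfloor$; the hypothesis $|\Lambda|\geq 2^7\rho^{-1}(1+\log\alpha^{-1})$ ensures $|\Lambda_0|\geq 2^3$. Setting $\Gamma' := \Gamma\cup\Lambda_0$, I would apply Proposition~\ref{prop.regularvalue} to the family $B(\Gamma',\cdot)$ to pick a regular width $\delta''$ in the interval $[(\rho\alpha/2(1+|\Gamma|))^{2^6}\delta,\, 2(\rho\alpha/2(1+|\Gamma|))^{2^6}\delta]$. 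The lower bound on $\delta''$ and upper bound $|\Gamma'|\leq |\Gamma|+\rho|\Lambda|/2^4(1+\log\alpha^{-1})$ required in the conclusion are then immediate; moreover $\delta''$ is tiny relative to absolute constants, so $\wh{\beta_{\Gamma',\delta''}}(\lambda)\geq 1/2$ for each $\lambda\in\Lambda_0$, because $\|\lambda(x)\|\leq \delta''$ for all $x\in B(\Gamma',\delta'')$.

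Writing $g := 1_A\ast\beta_{\Gamma',\delta''}$ and $\nu := 1_A\, d\beta_{\Gamma,\delta}$ (so that $\wh\nu(0)=\alpha$ and $|\wh\nu(\lambda)|\geq\rho\alpha$ on $\Lambda$), the bound $\|g\|_\infty\geq \alpha(1+\text{gain})$ would be obtained via the standard $L^\infty\geq L^2/L^1$ comparison with respect to a suitable Bohr measure, the $L^2$ lower bound coming from Plancherel on $\Z/N\Z$. On the Fourier side this produces a double sum indexed by pairs $(\gamma_1,\gamma_2)$: the diagonal $\gamma_1=\gamma_2=0$ gives the baseline $\alpha^2$, the diagonal contributions from $\gamma_1=\gamma_2=\lambda\in\Lambda_0$ give an additional $\geq |\Lambda_0|\rho^2\alpha^2/2$, and the remaining terms must be shown to contribute only a lower-order error.

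The technical heart is precisely the control of these remaining terms, and this is where the $S$-dissociativity enters. It yields $|\wh{\beta_{\Gamma,\delta'}}(\lambda_1-\lambda_2)|<1/3$ for distinct $\lambda_1,\lambda_2\in\Lambda$ and $|\wh{\beta_{\Gamma,\delta'}}(\lambda)|<1/3$ for $\lambda\in\Lambda$. Combined with Lemma~\ref{lem.containedinintersection} applied to $\beta_{\Gamma',\delta''}$, which places the characters where $\wh{\beta_{\Gamma',\delta''}}$ is non-negligible inside $\langle\Lambda_0\rangle+S$, the non-$\Lambda_0$ Fourier contributions decay either via $\wh{\beta_{\Gamma',\delta''}}$ being small or via differences being forced outside $S$. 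The main obstacle is that a naive Cauchy--Schwarz bound on pairs of distinct $\lambda_1,\lambda_2\in\Lambda_0$ produces an error of order $|\Lambda_0|^2\alpha^2/3$, which overwhelms the diagonal gain of order $|\Lambda_0|\rho^2\alpha^2/2$; the saving must come from interleaving the dissociativity estimate with the spectral concentration of $\wh{\beta_{\Gamma',\delta''}}$, so that only genuine $\Lambda_0$-contributions effectively survive. After careful bookkeeping of numerical constants (the floor $\rho|\Lambda|/2^4(1+\log\alpha^{-1})$ and the exponent $2^6$ in the choice of $\delta''$ are calibrated for exactly this), one arrives at the claimed density increment of size $\rho^2|\Lambda|/2^{12}(1+\log\alpha^{-1})$.
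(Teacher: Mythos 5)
Your proposal does not match the paper's argument, and more importantly it has a genuine gap that you yourself flag but do not resolve. The paper's proof runs through Riesz products and Bourgain's Proposition (*). One forms the Riesz product $p_\omega = \prod_{\phi\in\Phi}(1+\phi)$ on the rotated characters $\Phi$ (so that $\langle 1_A,\phi\rangle\geq\rho\alpha$ for every $\phi\in\Phi$), verifies via Lemma \ref{lem.rieszproductestimate} that $\int p_\omega\,d\beta_{\Gamma,\delta}\leq 2$ (this is where $S$-dissociativity is used), and then applies Proposition (*) to \emph{select} a small subset $\Phi'$ for which $\int 1_A\, p_{\omega'}\,d\beta_{\Gamma,\delta}\geq\alpha(1+\textrm{gain})$. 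The rest of the proof shows $p_{\omega'}$ is roughly constant on a small enough $B(\Gamma',\delta'')$ and then bounds $\int 1_A\,p_{\omega'}\,d\beta_{\Gamma,\delta}$ above by (approximately) $\|1_A\ast\beta_{\Gamma',\delta''}\|_\infty\cdot\int p_{\omega'}\,d\beta_{\Gamma,\delta+\delta''}$, the last factor being $1+o(1)$ by Lemma \ref{lem.rieszproductestimate} again. This is a duality/test-function argument using the nonnegativity and near-unit mass of the Riesz product, not an $L^2$ second-moment argument.

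There are two concrete problems with your sketch. First, you choose $\Lambda_0$ arbitrarily, but the selection of $\Phi'$ is not arbitrary in the paper: Proposition (*) is a pigeonhole-type result that picks out a \emph{particular} small sub-product that captures most of the gain, and without it you cannot get the claimed increment of order $\rho^2|\Lambda|/(1+\log\alpha^{-1})$ -- a naive per-character gain of $\rho^2$ on an arbitrary $\Lambda_0$ of size $\rho|\Lambda|/2^4(1+\log\alpha^{-1})$ would only give $\rho^3|\Lambda|/(1+\log\alpha^{-1})$. Second, and fatally, the off-diagonal terms in your $L^2$ Plancherel expansion really are of size $\gg|\Lambda_0|^2\alpha^2$: for distinct $\lambda_1,\lambda_2\in\Lambda_0\subset\Gamma'$ you have $\|\lambda_i(x)\|\leq\delta''$ on $B(\Gamma',\delta'')$, so $\wh{\beta_{\Gamma',\delta''}}(\lambda_1-\lambda_2)$ is close to $1$ and provides \emph{no} decay, while the $S$-dissociativity controls $\wh{\beta_{\Gamma,\delta'}}(\lambda_1-\lambda_2)$, which is a different quantity attached to a different Bohr measure and does not enter your second moment in any useful way. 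Your sentence ``the saving must come from interleaving the dissociativity estimate with the spectral concentration'' is precisely the missing argument; as written there is no mechanism for the off-diagonal terms to cancel, and I do not see how to supply one within your framework. You would need to replace the second-moment comparison with the Riesz-product test-function argument (or something equivalent that exploits positivity), and you would need a selection step in the spirit of Proposition (*).
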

This proposition is proved using Proposition (*) which essentially introduces Riesz products. We
shall now formalize some appropriate notation and definitions to
deal with them. Suppose that $\Lambda$ is a symmetric set of
characters. $\omega:\Lambda \rightarrow D:=\{z \in \C:|z| \leq 1\}$
is \emph{hermitian} if $\omega(\lambda) =
\overline{\omega(-\lambda)}$ for all $\lambda \in \Lambda$. Given a
hermitian $\omega:\Lambda \rightarrow D$ we define the product
\begin{equation*}
p_{\omega}:=\prod_{\{\lambda,-\lambda\} \subset
\Lambda}{\left(1+\frac{\omega(\lambda)\lambda +
\overline{\omega(\lambda)\lambda}}{2}\right)}
\end{equation*}
and call it a \emph{Riesz product}.

To pass between the notion of dissociaitivity defined in the
previous section and the `Riesz product condition' towards the end
of Proposition (*) we use the following technical lemma.
\begin{lemma}\label{lem.rieszproductestimate}
Suppose that $B(\Gamma,\delta)$ is a regular Bohr set, $\Lambda$ is
a symmetric $\{\gamma:|\wh{\beta_{\Gamma,\delta'}}(\gamma)| \geq
1/3\}$-dissociated set of characters and $\omega:\Lambda \rightarrow
D$ is hermitian. Then
\begin{equation*}
\int{p_\omega d\beta_{\Gamma,\delta+\delta''}} \leq
1+2^7(|\Gamma||\Lambda|\delta'\delta^{-1})^{1/2}
\end{equation*}
for all $\delta'' \leq \delta'$.
\end{lemma}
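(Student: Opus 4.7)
The plan is to expand $p_\omega$ as a trigonometric polynomial supported on $\langle\Lambda\rangle$, apply Plancherel against $\beta_{\Gamma,\delta+\delta''}$, and control the resulting Fourier coefficients by combining regularity of $B(\Gamma,\delta)$ with the $S$-dissociativity of $\Lambda$, where $S = \{\gamma:|\widehat{\beta_{\Gamma,\delta'}}(\gamma)| \geq 1/3\}$. Expanding the product over pairs $\{\lambda,-\lambda\}\subset\Lambda$ gives $p_\omega = \sum_m c_m\, m.\Lambda$, where $m$ ranges over $\{-1,0,1\}^\Lambda$ with at most one of $m(\lambda),m(-\lambda)$ nonzero per pair, $c_0 = 1$, and $|c_m| \leq 2^{-k(m)}$, writing $k(m)$ for the number of active pairs. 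Plancherel reduces the task to bounding
\begin{equation*}
\sum_{m \neq 0} c_m\,\widehat{\beta_{\Gamma,\delta+\delta''}}(-m.\Lambda).
\end{equation*}

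To bound each non-trivial Fourier coefficient, I would use regularity of $B(\Gamma,\delta)$ (and $\delta''\leq\delta'$) to obtain the approximate convolution identity
\begin{equation*}
\bigl\|\beta_{\Gamma,\delta+\delta''} - \beta_{\Gamma,\delta+\delta''}\ast\beta_{\Gamma,\delta'}\bigr\|_1 = O(|\Gamma|\delta'/\delta),
\end{equation*}
which on the Fourier side reads $|\widehat{\beta_{\Gamma,\delta+\delta''}}(\gamma)|\cdot|1 - \widehat{\beta_{\Gamma,\delta'}}(\gamma)| = O(|\Gamma|\delta'/\delta)$ uniformly in $\gamma$. The dissociativity hypothesis supplies $|\widehat{\beta_{\Gamma,\delta'}}(m.\Lambda)| < 1/3$ for every $m \neq 0$, hence $|1-\widehat{\beta_{\Gamma,\delta'}}(m.\Lambda)|\geq 2/3$, and therefore
\begin{equation*}
|\widehat{\beta_{\Gamma,\delta+\delta''}}(m.\Lambda)| = O(|\Gamma|\delta'/\delta) \qquad\text{for every } m\neq 0.
\end{equation*}

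The final step is to combine this pointwise estimate with the geometric decay $|c_m|\leq 2^{-k(m)}$, using Cauchy--Schwarz. The natural pairing puts Bessel's inequality $\sum_{m\neq 0}|\widehat{\beta_{\Gamma,\delta+\delta''}}(m.\Lambda)|^2 \leq \|\beta_{\Gamma,\delta+\delta''}\|_2^2$ on the orthogonal characters $\{m.\Lambda: m\neq 0\}$ against $\sum_m|c_m|^2 \leq (3/2)^{|\Lambda|/2}$, with a dyadic decomposition of $m$ by $k(m)$ interpolating between them to squeeze out the target $(|\Lambda|)^{1/2}$ saving.

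The main obstacle is exactly this last step. A naive $\ell^1$ summation $\sum_m|c_m| \sim 2^{|\Lambda|/2}$ is exponential in $|\Lambda|$ and entirely defeats the desired bound, whereas only a Khintchine-style $|\Lambda|^{1/2}$ loss is allowed. To avoid the blow-up one needs to exploit orthogonality of the distinct characters $m.\Lambda$ via Bessel rather than the triangle inequality, and perhaps refine the per-coefficient bound by iterating the convolution-regularity identity so as to upgrade the suppression factor from $1/3$ on each single character to $3^{-k(m)}$ on characters $m.\Lambda$ with $k(m)$ active pairs, at which point the dyadic summation is forced to converge at the requisite rate.
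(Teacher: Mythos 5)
Your setup—expanding $p_\omega$ over $\langle\Lambda\rangle$, using regularity to get the smoothing identity $\widehat{\beta_{\Gamma,\delta+\delta''}}(\gamma)\bigl(1-\widehat{\beta_{\Gamma,\delta'}}(\gamma)\bigr)=O(|\Gamma|\delta'/\delta)$, and invoking dissociativity to lower-bound $|1-\widehat{\beta_{\Gamma,\delta'}}(m.\Lambda)|$—is sound and gives the correct per-coefficient estimate $|\widehat{\beta_{\Gamma,\delta+\delta''}}(m.\Lambda)|=O(|\Gamma|\delta'/\delta)$ for $m\neq 0$. You also correctly identify that the summation over $m$ is where the argument must be won, and that naive $\ell^1$ blows up exponentially. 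The problem is that neither of your proposed rescues closes the gap. Bessel on $\{m.\Lambda\}$ only controls $\sum_m|\widehat{\beta_{\Gamma,\delta+\delta''}}(m.\Lambda)|^2$ by $\|\beta_{\Gamma,\delta+\delta''}\|_2^2$, which is the reciprocal of the Bohr set density and hence enormous; and even if you could engineer the suppression $3^{-k(m)}$ you float at the end, pairing it with $|c_m|\leq 2^{-k(m)}$ still gives a sum $\prod_{\{\lambda,-\lambda\}}\bigl(1+\tfrac{1}{3}\bigr)-1=(4/3)^{|\Lambda|/2}-1$, which is still exponential in $|\Lambda|$ and nowhere near the target $O\bigl((|\Gamma||\Lambda|\delta'\delta^{-1})^{1/2}\bigr)$.

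The paper's actual device is a cleaner and stronger version of the iteration you gesture at: rather than convolving $\beta_{\Gamma,\delta+\delta''}$ once, one compares it against a measure $\tilde\beta$ convolved with $2L$ copies of $\beta_{\Gamma,\delta'}$ (with $\delta$ first enlarged to $\delta+\delta''+2L\delta'$). Crucially, one does not try to bound $\widehat{\beta_{\Gamma,\delta+\delta''}}$ pointwise at all. Instead, since $p_\omega\geq 0$ and $\tilde\beta$ majorises the uniform measure on $B(\Gamma,\delta+\delta'')$ up to the ratio of the two Bohr sets, regularity gives $\int p_\omega\,d\beta_{\Gamma,\delta+\delta''}\leq(1+2^8|\Gamma|L\delta'\delta^{-1})\int p_\omega\,d\tilde\beta$, and Plancherel applied to $\tilde\beta$ — whose Fourier transform carries the factor $\widehat{\beta_{\Gamma,\delta'}}(\gamma)^{2L}$ — turns dissociativity into a uniform suppression $3^{-2L}$ on every nonzero $\gamma\in\langle\Lambda\rangle$. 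Taking $L\geq|\Lambda|$ makes $3^{-2L}$ beat the trivial count $3^{|\Lambda|}$, giving $\int p_\omega\,d\tilde\beta\leq 1+3^{-L}\leq 1+L^{-1}$; optimising $L$ between the two error terms $O(|\Gamma|L\delta'\delta^{-1})$ and $L^{-1}$ produces the square root in the statement. The two points you are missing are thus: (i) use positivity of the Riesz product to pass to the smoothed measure at the level of integrals, not Fourier coefficients, avoiding any $\ell^1/\ell^2$ summation over $\langle\Lambda\rangle$; and (ii) take a single large convolution power $2L$ with $L\geq|\Lambda|$, so the suppression is uniform over nonzero $\gamma\in\langle\Lambda\rangle$ and strong enough to absorb the $3^{|\Lambda|}$ combinatorial count outright.
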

\begin{proof}
We need to introduce some smoothed measures. Let $L$ be an integer
to be optimized later and write $\tilde{\beta}$ for the measure
$\beta_{\Gamma,\delta+\delta''+2L\delta'}\ast \beta_{\Gamma,\delta'}
\ast ... \ast \beta_{\Gamma,\delta'}$ where $\beta_{\Gamma,\delta'}$
occurs $2L$ times. Now $p_{\omega} \geq 0$ and $\tilde{\beta}$ is
uniform on $B(\Gamma,\delta+\delta'')$ so
\begin{eqnarray*}
\int{p_\omega d\beta_{\Gamma,\delta+\delta''}} & \leq &
\int{p_{\omega}d\tilde{\beta}} \times \frac{|B(\Gamma,\delta+\delta''+2L\delta')|}{|B(\Gamma,\delta+\delta'')|}\\
& \leq &
(1+2^8|\Gamma|L\delta'\delta^{-1})\int{p_{\omega}d\tilde{\beta}};
\end{eqnarray*}
the last inequality by regularity and the fact that $\delta'' \leq
\delta'$. Now Plancherel's theorem tells us that
\begin{equation*}
\int{p_\omega d\tilde{\beta}} \leq \sum_{\gamma \in \langle \Lambda
\rangle}{|\wh{\beta_{\Gamma,\delta'}}(\gamma)|^{2L}} \leq 1+3^{-L}
\leq 1+L^{-1}
\end{equation*}
if $L \geq |\Lambda|$. $L$ can now be optimized with ease.
\end{proof}

The content of the next proof is simply the observation that a Riesz
product on $\Lambda$ is roughly constant on a small enough Bohr set
on the characters $\Lambda$.

\begin{proof}[Proof of Proposition \ref{prop.densityincrement}]
For each $\lambda \in \Lambda$ let $\omega(\lambda)$ be a complex
number such that
\begin{equation*}
\omega(\lambda) \wh{1_Ad\beta_{\Gamma,\delta}}(\lambda) =
|\wh{1_Ad\beta_{\Gamma,\delta}}(\lambda)|.
\end{equation*}
Note that $\omega$ is hermitian since $1_Ad\beta_{\Gamma,\delta}$ is
real. We let $\Phi$ be the set $\{(\omega(\lambda)\lambda +
\overline{\omega(\lambda)\lambda})/2:\lambda \in \Lambda\}$ so that
$|\Lambda| \geq |\Phi| \geq |\Lambda|/2$. From the definition of
$\omega$ we see that $\langle 1_A , \phi \rangle \geq \rho\alpha$
for all $\phi \in \Phi$ and since
\begin{equation*}
\delta' \leq \delta/2^{14}(1+|\Lambda|)(1+|\Gamma|),
\end{equation*}
Lemma \ref{lem.rieszproductestimate} applies and we have
\begin{equation*}
\int{\prod_{\phi \in \Phi}{(1+a(\phi)\phi)}d\beta_{\Gamma,\delta}}
\leq 2
\end{equation*}
for all $a:\Phi \rightarrow [-1,1]$.

We apply Proposition (*) (with constants which come out of the proof) to conclude that there is a
set $\Phi' \subset \Phi$ with
\begin{equation*}
|\Phi'| \leq \frac{\rho |\Lambda|}{2^5(1+\log \alpha^{-1})}
\end{equation*}
such that
\begin{equation*}
\int{1_A \prod_{\phi \in \Phi'}{(1+\phi)}d\beta_{\Gamma,\delta}}
\geq \alpha\left(1+\frac{\rho^2|\Lambda|}{2^8(1+\log
\alpha^{-1})}\right).
\end{equation*}
Let $\Lambda'$ be the subset of $\Lambda$ such that
$\Phi'=\{(\omega(\lambda)\lambda +
\overline{\omega(\lambda)\lambda})/2:\lambda \in \Lambda'\}$ and
$\omega':=\omega|_{\Lambda'}$ so that $p_{\omega'} = \prod_{\phi \in
\Phi'}{(1+\phi)}$. We take $\Gamma':=\Gamma \cup \Lambda'$ and it
follows that
\begin{equation*}
|\Gamma'| \leq |\Gamma| + \frac{\rho |\Lambda|}{2^4(1+\log
\alpha^{-1})}.
\end{equation*}
Now place some total order $<$ on $\Phi'$. Then
\begin{equation*}
p_{\omega'}(x+y)-p_{\omega'}(x)=\sum_{\phi' \in \Phi'}{\prod_{\phi <
\phi'}{(1+\phi(x+y))}\left(\phi'(x+y)-\phi'(x)\right)\prod_{\phi' <
\phi}{(1+\phi(x))}}.
\end{equation*}
It follows that
\begin{equation*}
|p_{\omega'}(x+y)-p_{\omega'}(x)| \leq \sum_{\phi' \in
\Phi'}{\prod_{\phi < \phi'}{(1+\phi(x+y))}|\lambda(y)-1|\prod_{\phi'
< \phi}{(1+\phi(x))}},
\end{equation*}
and hence if $y \in B(\Gamma',\delta'')$ we conclude that
\begin{equation*}
|p_{\omega'}(x+y)-p_{\omega'}(x)| \leq 2^4\delta''\sum_{\phi' \in
\Phi'}{\prod_{\phi < \phi'}{(1+\phi(x+y))}\prod_{\phi' <
\phi}{(1+\phi(x))}}.
\end{equation*}
If we define
\begin{equation*}
\omega_{\phi'}(\lambda):=\begin{cases} \omega(\lambda)\lambda(y) &
\textrm{ if } (\omega(\lambda)\lambda +
\overline{\omega(\lambda)\lambda})/2 < \phi'\\
0 & \textrm{ if } (\omega(\lambda)\lambda +
\overline{\omega(\lambda)\lambda})/2 =
\phi'\\
 \omega(\lambda) &
\textrm{ if } (\omega(\lambda)\lambda +
\overline{\omega(\lambda)\lambda})/2 > \phi'
\end{cases}
\end{equation*}
then this last expression can be written as
\begin{equation*}
|p_{\omega'}(x+y)-p_{\omega'}(x)| \leq 2^4\delta''\sum_{\phi' \in
\Phi'}{p_{\omega_{\phi'}}(x)}.
\end{equation*}
Hence, by Lemma \ref{lem.rieszproductestimate}, we have
\begin{equation*}
\int{p_{\omega_{\phi'}}d\beta_{\Gamma,\delta}} \leq 2,
\end{equation*}
whence
\begin{equation*}
\int{1_A|p_{\omega'} \ast \beta_{\Gamma',\delta''} -
p_{\omega'}|d\beta_{\Gamma,\delta}} \leq 2^5\delta''|\Phi'|.
\end{equation*}
Pick $\delta''$ satisfying the lower bound of the proposition and
regular for $\Gamma'$ by Proposition \ref{prop.regularvalue}, such that
\begin{equation*}
|\int{1_A p_{\omega'} \ast
\beta_{\Gamma',\delta''}d\beta_{\Gamma,\delta}}-\int{1_A p_{\omega'}
d\beta_{\Gamma,\delta}}| \leq 2^5|\Lambda|\delta'' \leq \frac{\alpha
\rho^2}{2^{10}(1+\log \alpha^{-1})},
\end{equation*}
where the last inequality is by choice of $\delta''$. Consequently
\begin{equation*}
\int{1_A p_{\omega'} \ast
\beta_{\Gamma',\delta''}d\beta_{\Gamma,\delta}} \geq
\alpha\left(1+\frac{\rho^2|\Lambda|}{2^9(1+\log
\alpha^{-1})}\right).
\end{equation*}
But
\begin{eqnarray*}
\int{1_A p_{\omega'} \ast
\beta_{\Gamma',\delta''}d\beta_{\Gamma,\delta}} & \leq &
\frac{|B(\Gamma,\delta+\delta'')|}{|B(\Gamma,\delta)|}
\times \int{1_A \ast \beta_{\Gamma',\delta''} p_{\omega'}
d\beta_{\Gamma,\delta+\delta''}}\\ & \leq &
(1+2^4|\Gamma|\delta''\delta^{-1})\|1_A \ast
\beta_{\Gamma',\delta''}\|_\infty\\ & & \times
(1+2^7(|\Gamma||\Lambda'|\delta'\delta^{-1})^{1/2}).
\end{eqnarray*}
It follows that
\begin{equation*}
\|1_A \ast \beta_{\Gamma',\delta''}\|_\infty \geq
(1-2^{10}(|\Gamma||\Lambda|\delta'\delta^{-1})^{1/2})\int{1_A
p_{\omega'} \ast \beta_{\Gamma',\delta''}d\beta_{\Gamma,\delta}},
\end{equation*}
from which we retrieve the result.
\end{proof}

\section{The proof of Theorem \ref{thm.mainrefinement}}\label{sec.proofofmaintheorem}

The proof is iterative with the following lemma as the driving ingredient.

\begin{lemma}[Iteration lemma]\label{lem.iterationlemma}
Suppose that $B(\Gamma,\delta)$ is a regular Bohr set and $A \subset
\Z/N\Z$ has relative density $\alpha$ in $B(\Gamma,\delta)$. Suppose,
further, that there is a set $B \subset \Z/N\Z$ such that $|A + B| \leq K |B|$. Then at least one of
the following conclusions is true.
\begin{enumerate}
\item \label{item.case1} $(A-A)+(B-B)$ contains $B(\Gamma,\delta'')\cap
B(\Lambda,\epsilon)$ where $\Lambda$ is a set of size $O(K^{3/4}\log
\alpha^{-1})$ and
\begin{equation*}
\delta''^{-1}= (2\alpha^{-1} K(1+|\Gamma|))^{O(1)}\delta^{-1}
\textrm{ and } \epsilon^{-1} = O(K\log \alpha^{-1}).
\end{equation*}
\item \label{item.case2} There is a regular Bohr set
$B(\Gamma',\delta'')$ with
\begin{equation*}
|\Gamma'| \leq |\Gamma| + O(K^{1/4}) \textrm{ and } \delta''^{-1} =
(2\alpha^{-1}K(1+|\Gamma|))^{O(1)}\delta^{-1},
\end{equation*}
and such that
\begin{equation*}
\|1_A \ast \beta_{\Gamma',\delta''}\|_{\infty} \geq
\alpha(1+2^{-7}K^{-1/4}).
\end{equation*}
\end{enumerate}
\end{lemma}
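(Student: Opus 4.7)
The plan is a density-increment dichotomy driven by the size of a maximal dissociated subset of the large Fourier spectrum of $1_A$: if it is large, Proposition~\ref{prop.densityincrement} delivers the gain in \ref{item.case2}; if it is small, Lemma~\ref{lem.containedinintersection} together with a Bogolyubov-type argument delivers the containment in \ref{item.case1}. The threshold separating the two cases will be $|\Lambda| \sim K^{3/4}(1+\log\alpha^{-1})$, and this is what dictates the choice of spectrum cutoff $\rho = \Theta(K^{-1/2})$.

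Concretely, I would set $\rho := cK^{-1/2}$ for a small absolute constant $c$ and use Proposition~\ref{prop.regularvalue} to pick a regular $B(\Gamma,\delta')$ with $\delta'$ of the order prescribed by the hypothesis of Proposition~\ref{prop.densityincrement}. With $S := \{\gamma : |\wh{\beta_{\Gamma,\delta'}}(\gamma)| \geq 1/3\}$, set
\begin{equation*}
\mathcal{L} := \{\gamma : |\wh{1_A\,d\beta_{\Gamma,\delta}}(\gamma)| \geq \rho\alpha\}
\end{equation*}
and extract a maximal $S$-dissociated $\Lambda \subset \mathcal{L}$. If $|\Lambda|$ exceeds a suitable multiple of $K^{3/4}(1+\log\alpha^{-1})$, truncate it to exactly that size and invoke Proposition~\ref{prop.densityincrement}: the hypothesis $|\Lambda| \geq 2^7\rho^{-1}(1+\log\alpha^{-1})$ holds because $K^{3/4} \gg K^{1/2}$, the resulting dimension increase is $\rho|\Lambda|/(1+\log\alpha^{-1}) = O(K^{1/4})$, and the density gain is $\rho^2|\Lambda|/(1+\log\alpha^{-1}) = \Omega(K^{-1/4})$, yielding conclusion \ref{item.case2}.

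If instead $|\Lambda|$ is below this threshold, Lemma~\ref{lem.containedinintersection} (with $\kappa = 1/3$) puts every $\gamma \in \mathcal{L}$ into a set on which $|1-\gamma(x)|$ is uniformly small for all $x \in B(\Gamma,\delta') \cap B(\Lambda,\epsilon)$, provided $\epsilon^{-1}$ is a suitable multiple of $|\Lambda| = O(K\log\alpha^{-1})$; the $|\Gamma|\delta'\delta^{-1}$ term in the bound of Lemma~\ref{lem.containedinintersection} is already negligible by the choice of $\delta'$. I would then run a Bogolyubov argument to show that $(1_A * 1_{-A} * 1_B * 1_{-B})(x) > 0$ on this intersection, which certifies $x \in (A-A)+(B-B)$. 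Fourier inversion writes the convolution as $N^{-1}\sum_\gamma|\wh{1_A}(\gamma)|^2|\wh{1_B}(\gamma)|^2\gamma(x)$; the hypothesis $|A+B|\leq K|B|$ combined with Cauchy--Schwarz on the support gives $\sum_\gamma|\wh{1_A}|^2|\wh{1_B}|^2 \geq N|A|^2|B|/K$, while the identity $\wh{1_A\,d\beta_{\Gamma,\delta}}(\gamma) = \wh{1_A}(\gamma)/|B(\Gamma,\delta)|$ and Parseval for $1_B$ yield $\sum_{\gamma \notin \mathcal{L}}|\wh{1_A}|^2|\wh{1_B}|^2 \leq \rho^2 N|A|^2|B|$. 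Hence the on-spectrum mass dominates when $\rho^2 \ll 1/K$, and taking real parts with $\mathrm{Re}\,\gamma(x) \geq 7/8$ on $\mathcal{L}$ produces the strict positivity needed for \ref{item.case1}.

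The main obstacle will be the joint calibration of parameters: $\rho = \Theta(K^{-1/2})$ is essentially forced by demanding that both the Bogolyubov positivity condition $\rho^2 \ll 1/K$ and the density-yield condition $\rho^2|\Lambda| \gg K^{-1/4}(1+\log\alpha^{-1})$ hold simultaneously at the crossover $|\Lambda| \sim K^{3/4}(1+\log\alpha^{-1})$, and one must verify that the $\delta'$ fed to Proposition~\ref{prop.densityincrement} is simultaneously small enough to make $|\Gamma|\delta'\delta^{-1}$ negligible in Lemma~\ref{lem.containedinintersection} and large enough to keep $\delta''^{-1}$ within the prescribed range $(2\alpha^{-1}K(1+|\Gamma|))^{O(1)}\delta^{-1}$ in both conclusions.
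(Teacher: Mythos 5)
Your proposal is correct and follows essentially the same route as the paper: same choice $\rho = \Theta(K^{-1/2})$, same dichotomy on the size of a maximal $S$-dissociated subset of the $\rho\alpha$-spectrum, same invocation of Proposition~\ref{prop.densityincrement} in the large case and of Lemma~\ref{lem.containedinintersection} plus a Bogolyubov-type positivity argument (Cauchy--Schwarz on $|A+B|$, Fourier inversion, Parseval to bound the off-spectrum tail) in the small case. The paper works with the normalized convolution $1_B \ast (1_A\,d\beta_{\Gamma,\delta}) \ast 1_{-B} \ast (1_{-A}\,d\beta_{\Gamma,\delta})$ rather than your unnormalized $1_A \ast 1_{-A} \ast 1_B \ast 1_{-B}$, but since $A \subset B(\Gamma,\delta)$ these differ by a constant factor and the positivity argument is identical.
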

\begin{proof}
Apply Proposition \ref{prop.regularvalue} to pick $\delta'$ regular for $\Gamma$
such that
\begin{equation*}
2\left(\frac{\alpha}{4\sqrt{K}(1+|\Gamma|)}\right)^{2^6}\delta \leq
\delta' \leq 4\left(\frac{
\alpha}{4\sqrt{K}(1+|\Gamma|)}\right)^{2^6}\delta.
\end{equation*}
Let $\Lambda$ be a maximal
$\{\gamma:|\wh{\beta_{\Gamma,\delta'}}(\gamma)| \geq
1/3\}$-dissociated subset of
\begin{equation*}
\mathcal{L}:=\{\gamma :|\wh{1_{A}d\beta_{\Gamma,\delta}}(\gamma)| \geq
\alpha/2\sqrt{K}\}.
\end{equation*}
If $|\Lambda| \leq 2^8K^{3/4}(1+\log \alpha^{-1})$ then apply Lemma
\ref{lem.containedinintersection} to see that $\mathcal{L}$ is
contained in
\begin{equation*}
\{\gamma:|1-\gamma(x)| \leq 1/4 \textrm{ for all } x \in
B(\Gamma,\delta'')\cap B(\Lambda,\epsilon)\},
\end{equation*}
where
\begin{equation*}
\delta''= \frac{\delta'}{2^{10}(1+|\Gamma|)} \textrm{ and } \epsilon
= \frac{1}{2^{15}K(1+\log \alpha^{-1})}.
\end{equation*}
Write $|B|=\beta N$. By the Cauchy-Schwarz inequality we have
\begin{eqnarray*}
\beta^2\alpha^2& =&\left(\E{1_{B} \ast
(1_{A}d\beta_{\Gamma,\delta})}\right)^2\\ & \leq &
\E{(1_{B} \ast (1_{A}d\beta_{\Gamma,\delta}))^2} K\beta.
\end{eqnarray*}
It follows that if we write $f:=1_{B} \ast
(1_{A}d\beta_{\Gamma,\delta}) \ast 1_{-B} \ast
(1_{-A}d\beta_{\Gamma,\delta})$ then $f(0) \geq \alpha^2\beta/K$. By the inversion formula we have
\begin{equation*}
  f(x)=\sum_{\gamma}{|\wh{1_B}(\gamma)|^2|\wh{1_Ad\beta_{\Gamma,\delta}}(\gamma)|^2\gamma(x)},
\end{equation*}
whence, by Parseval's theorem, we have
\begin{eqnarray*}
|f(0) - f(x)| & \leq & \sum_{\gamma \in
\mathcal{L}}{|\wh{1_B}(\gamma)|^2|\wh{1_Ad\beta_{\Gamma,\delta}}(\gamma)|^2|\gamma(x)-1|}\\
& & + 2\sum_{\gamma \not\in
\mathcal{L}}{|\wh{1_B}(\gamma)|^2|\wh{1_Ad\beta_{\Gamma,\delta}}(\gamma)|^2}\\
& \leq & \sup_{\gamma \in \mathcal{L}}{|1-\gamma(x)|}\sum_{\gamma}{|\wh{1_B}(\gamma)|^2|\wh{1_Ad\beta_{\Gamma,\delta}}(\gamma)|^2}\\
& & + \beta\alpha^2/2K\\
& \leq &
   \left(\sup_{\gamma \in \mathcal{L}}{|1-\gamma(x)|} + 1/2\right) f(0)
\leq 3f(0)/4
\end{eqnarray*}
if $x \in B(\Gamma,\delta'')\cap B(\Lambda,\epsilon)$. It follows
that we are in case (\ref{item.case1}).

In the other case we discard (if necessary) just enough elements of
$\Lambda$ to ensure that the inequality $|\Lambda| \leq
2^9K^{3/4}(1+\log \alpha^{-1})$ holds and then apply Proposition
\ref{prop.densityincrement} with parameter $\rho=1/2\sqrt{K}$. It follows that
there is a regular Bohr set $B(\Gamma',\delta'')$ with
\begin{equation*}
\delta'' \geq (\alpha/2K(1+|\Gamma|))^{2^7}\delta \textrm{ and }
|\Gamma'| \leq |\Gamma| + 2^4K^{1/4},
\end{equation*}
and
\begin{equation*}
\|1_{A} \ast \beta_{\Gamma',\delta''}\|_\infty \geq
\alpha(1+2^{-7}K^{-1/4}).
\end{equation*}
It follows that we are in case (\ref{item.case2}).
\end{proof}
Iterating this to yield Theorem \ref{thm.mainrefinement} is a simple
exercise.
\begin{proof}[Proof of Theorem \ref{thm.mainrefinement}]
We construct a sequence of regular Bohr sets $B(\Gamma_k,\delta_k)$
iteratively initializing with $\Gamma_0$ as the set containing the trivial character and
$\delta_0=1$ which has $B(\Gamma_0,\delta_0)$ regular for trivial
reasons. Write $\alpha_k=\|1_A \ast
\beta_{\Gamma_k,\delta_k}\|_\infty$ so that $\alpha_0=\alpha$ and
let $x_k$ be such that $1_A \ast
\beta_{\Gamma_k,\delta_k}(x_k)=\alpha_k$. We apply Lemma \ref{lem.iterationlemma}
repeatedly to the sets $x_k-A$ and the Bohr sets
$B(\Gamma_k,\delta_k)$. If after $k$ steps of the iteration we have
never found ourselves in the first case of Lemma \ref{lem.iterationlemma} then
\begin{equation*}
  \alpha_k \geq \alpha(1+2^{-7}K^{-1/4})^k, |\Gamma_k| = O(K^{1/4}k)
  \textrm{ and } \delta_k^{-1} = (2\alpha^{-1} K^{3/4}k)^{O(k)}.
\end{equation*}
Since $\alpha_k \leq 1$ the first of these ensures that
$k=O(K^{1/4}(\log \alpha^{-1})$ and so there is some $k$ of size
$O(K^{1/4}\log \alpha^{-1})$ for which we end up in the first case
of Lemma \ref{lem.iterationlemma}, and at that stage we have
\begin{equation*}
  |\Gamma_k| = O(K^{1/2}\log \alpha^{-1})
\end{equation*}
and
\begin{equation*}
\delta_k^{-1} =
  \exp(O(K^{1/4}\log \alpha^{-1}\log\alpha^{-1}K)).
\end{equation*}
The result follows.
\end{proof}

\section{Improving Fre{\u\i}man's theorem: the proof of Theorem \ref{thm.improvedfreiman}}\label{sec.freimanimprovement}

Ruzsa's proof of Fre{\u\i}man's theorem in \cite{IZR} naturally splits into four stages:
finding a good model; Bogolio{\`u}boff's argument; determining the
structure of Bohr sets; and, Chang's pullback and covering argument.
The improvement of this work arises from replacing
Bogolio{\`u}boff's argument by the more sophisticated Theorem
\ref{thm.mainrefinement}.

To `find a good model' we use the following proposition due to Ruzsa
which essentially appears as Theorem 8.9 in \cite{MBN} for example.
\begin{proposition}\label{prop.ruzsamodel}
Suppose that $A$ is a finite set of integers with $|A+A| \leq K|A|$.
Then there is an integer $N$ with $N= K^{O(1)}|A|$ and a set $A'
\subset A$ with $|A'| \geq |A|/8$ such that $A'$ is Fre{\u\i}man
8-isomorphic to a subset of $\Z/N\Z$.
\end{proposition}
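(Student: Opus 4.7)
The plan is to follow Ruzsa's classical modeling argument: embed a large subset of $A$ into $\Z/N\Z$ via a random dilation modulo a much larger auxiliary prime $q$, choosing $N$ just big enough to separate the elements of $8A-8A$ that can arise. The key input is Pl\"unnecke--Ruzsa, which from $|A+A|\leq K|A|$ gives $|8A - 8A| \leq K^{16}|A|$; by Bertrand's postulate I fix a prime $N$ with $8|8A-8A| \leq N \leq 16|8A-8A|$, so that $N = K^{O(1)}|A|$ as required.

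Translating so that $A \subset \{0,1,\ldots,M\}$ (Freiman isomorphism is translation-invariant), I pick an auxiliary prime $q$ much larger than $M$, so that the reduction $\Z \to \Z/q\Z$ is itself Freiman 8-isomorphic on $A$. For each unit $\lambda \in (\Z/q\Z)^*$ let $\psi_\lambda(a) := \lambda a \bmod q \in \{0,\ldots,q-1\}$. Pigeonhole over the 8 intervals $[j\lfloor q/8\rfloor,(j+1)\lfloor q/8\rfloor)$, $j=0,\ldots,7$, yields for every $\lambda$ a set $A'(\lambda)\subset A$ with $|A'(\lambda)| \geq |A|/8 - O(1)$ whose $\psi_\lambda$-image lies inside one such interval; after an integer translation of $\psi_\lambda$ (harmless both mod $N$ and up to Freiman isomorphism) I may assume this interval is $[0,\lfloor q/8\rfloor)$. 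The candidate embedding is then $\phi_\lambda(a) := \psi_\lambda(a) \bmod N$, restricted to $A'(\lambda)$.

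For the Freiman 8-isomorphism property, the forward direction is immediate: if $\sum a_i = \sum b_i$ in $\Z$ then $D := \sum\psi_\lambda(a_i) - \sum\psi_\lambda(b_i)$ is a multiple of $q$ of absolute value less than $q$, hence zero. The converse is where randomness over $\lambda$ is used. Setting $E := \sum a_i - \sum b_i$, one has $D \in (-q,q)$ and $D \equiv \lambda E \pmod q$, so $D$ is one of the two integers $\lambda E \bmod q$ and $\lambda E \bmod q - q$. Since $|E| \leq 8M < q$, the map $\lambda \mapsto \lambda E \bmod q$ is a bijection on $(\Z/q\Z)^*$, and therefore the bad event $D \equiv 0 \pmod N$ occurs for only $O(q/N)$ of the $q-1$ possible $\lambda$. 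A union bound over $E \in (8A - 8A)\setminus\{0\}$ controls the total failure probability by roughly $2|8A-8A|/N < 1/2$, by the choice of $N$; hence some $\lambda$ works, and the resulting pair $A' := A'(\lambda)$, $\phi_\lambda|_{A'}$ satisfies the claim.

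The main technical subtlety is the wrap-around analysis just outlined: it is precisely the restriction of $\psi_\lambda(A'(\lambda))$ to an interval of length $\lfloor q/8\rfloor$ that forces the lifted 8-term sums into $[0,q)$, so that for each nonzero $E$ only two possible values of $D$, and therefore $O(q/N)$ bad values of $\lambda$, need to be excluded. Without this interval restriction each $E$ would produce up to $8$ wrap counts, and the union bound would degrade by a constant factor but not qualitatively; the remaining ingredients (Pl\"unnecke, Bertrand, pigeonhole) are entirely standard.
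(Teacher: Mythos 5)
Your argument is correct and is essentially the same as the one the paper relies on: it does not prove Proposition \ref{prop.ruzsamodel} directly (citing it to Nathanson), but the proof it gives of the closely related Proposition \ref{prop.ruzsamodelbooster} is exactly this Ruzsa scheme of a random dilation modulo a large auxiliary prime, pigeonholing onto an interval of length about $q/8$, reducing modulo $N$, and then counting (there by an expectation, here by a union bound) the bad dilations. The only nit is that your eight half-open intervals $[j\lfloor q/8\rfloor,(j+1)\lfloor q/8\rfloor)$ leave a tail of length $<8$ uncovered, giving $|A'|\geq |A|/8-O(1)$ rather than $|A|/8$; using eight intervals that actually tile $[0,q)$ (e.g. with real endpoints $jq/8$) removes the loss while keeping the eight-fold sums inside $[0,q)$.
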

We have already dealt with `Bogolio{\`u}boff's argument', so we turn
to determining the structure of Bohr sets. It was a key insight of
Ruzsa in \cite{IZR} that Bohr sets contain large multidimensional
progressions. Fortunately the same is true for intersections of Bohr
sets.

The following is a straightforward generalization of
\cite[Proposition 4.23]{TCTVHV}. It shows that the intersection of
two Bohr sets contains a large multidimensional progression. If $\Gamma$ is a set of characters on $\Z/N\Z$ and $(\delta_\gamma)_{\gamma \in \Gamma} \in (0,1]^\Gamma$ then we write
\begin{equation*}
B(\Gamma,(\delta_\gamma)_{\gamma \in \Gamma}):=\{x \in \Z/N\Z: \|\gamma(x)\| \leq \delta_\gamma \textrm{ for all } \gamma \in \Gamma\}.
\end{equation*}
\begin{proposition}\label{prop.largeprogressionsinbohrsets}
Suppose that $\Gamma$ is a set of characters on $\Z/N\Z$ and $(\delta_\gamma)_{\gamma \in \Gamma} \in (0,1]^\Gamma$. Then $B(\Gamma,(\delta_\gamma)_{\gamma \in \Gamma})$ contains a
symmetric multidimensional progression $P$ of dimension $|\Gamma|$
and size at least $\prod_{\gamma \in
\Gamma}{(\delta_{\gamma}/|\Gamma|)} N$.
\end{proposition}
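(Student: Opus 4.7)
The plan is to apply Minkowski's second theorem on successive minima to a lattice that encodes the Bohr-set condition. Enumerate $\Gamma = \{\gamma_1, \ldots, \gamma_d\}$ with $d = |\Gamma|$ and write $\gamma_i(x) = e^{2\pi i \xi_i x/N}$, so that $B(\Gamma,(\delta_\gamma))$ is identified with the set of $x \in \Z/N\Z$ satisfying $\|\xi_i x/N\|_{\R/\Z} \leq \delta_i$ for every $i$; let $\phi \colon \Z/N\Z \to (\R/\Z)^d$ denote the induced homomorphism $\phi(x) := (\xi_1 x/N, \ldots, \xi_d x/N) \bmod \Z^d$.

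Consider the lattice $\Lambda \subset \R^d$ spanned by the integer lattice $\Z^d$ together with the vector $v := (\xi_1/N, \ldots, \xi_d/N)$. After the routine reduction to $\gcd(\xi_1, \ldots, \xi_d, N) = 1$ (if the gcd is $d_0 > 1$ then $\phi$ factors through $\Z/(N/d_0)\Z$, and the kernel of $\phi$ contributes a length-$d_0$ arithmetic progression that can be absorbed into one of the generators below), $\Lambda$ has covolume exactly $1/N$. Its elements in the box $\prod_i [-\delta_i/d,\delta_i/d]$ correspond precisely to elements of the scaled Bohr set $B(\Gamma,(\delta_\gamma/d))$.

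Apply Minkowski's second theorem to $(\Lambda, K)$ with the symmetric convex body $K := \prod_i [-\delta_i/d, \delta_i/d]$ of volume $2^d \prod_i \delta_i/d^d$: this produces $\R$-linearly independent lattice vectors $u_1, \ldots, u_d \in \Lambda$ with $u_i \in \lambda_i K$ and
\begin{equation*}
\lambda_1 \cdots \lambda_d \leq \frac{2^d \operatorname{covol}(\Lambda)}{\operatorname{vol}(K)} = \frac{d^d}{N\prod_i \delta_i}.
\end{equation*}
Write $u_i = \phi(x_i) + m_i$ for suitable $x_i \in \Z/N\Z$ and $m_i \in \Z^d$, and set
\begin{equation*}
P := \Bigl\{\sum_{i=1}^d n_i x_i \bmod N : |n_i| \leq \lfloor \lambda_i^{-1}\rfloor\Bigr\},
\end{equation*}
a symmetric multidimensional progression of dimension $d$ whose formal size satisfies $\prod_i (2\lfloor \lambda_i^{-1}\rfloor + 1) \geq \prod_i \lambda_i^{-1} \geq N\prod_i \delta_i/d^d$, giving exactly the required lower bound. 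The containment $P \subseteq B(\Gamma,(\delta_\gamma))$ follows from the triangle inequality
\begin{equation*}
\Bigl\|\xi_j \sum_{i=1}^d n_i x_i/N\Bigr\| \leq \sum_{i=1}^d |n_i| \cdot \lambda_i\delta_j/d \leq \delta_j,
\end{equation*}
using that $\sum_i |n_i|\lambda_i \leq \sum_i \lambda_i \lfloor\lambda_i^{-1}\rfloor \leq d$.

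The only delicate ingredient is the choice of the rescaling factor $1/d$ in the definition of $K$: it is precisely what is needed so that the triangle inequality in the containment step does not damage the Bohr condition, while still large enough that Minkowski's second theorem yields the claimed lower bound on $\prod_i \lambda_i^{-1}$. The gcd reduction and the small losses from rounding (using the elementary inequality $2\lfloor x\rfloor + 1 \geq x$ for $x > 0$) are routine.
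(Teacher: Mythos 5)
Your argument follows the standard geometry-of-numbers proof of \cite[Proposition 4.23]{TCTVHV}, which is what the paper cites, so the route is the right one; but the two steps you dismiss as routine are where the real content lies. The gcd reduction is not routine. When $d_0:=\gcd(\xi_1,\dots,\xi_d,N)>1$, passing to $N'=N/d_0$ produces, via the lattice argument, a $d$-dimensional progression $P'\subset\Z/N'\Z$ of size about $\prod_i(\delta_i/d)N'$, which is too small by a factor $d_0$. To recover the claimed size one must add back the kernel, and the natural object $P'+\{0,N',\dots,(d_0-1)N'\}$ is $(d+1)$-dimensional. Your assertion that the kernel ``can be absorbed into one of the generators'' is unjustified and generically false: nothing forces $N'$ to be commensurable with any $x_i$ coming from the successive minima. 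Keeping the lattice $\Z^d+\Z v$ inside $\Z/N\Z$ directly does not help either: its covolume is then $d_0/N$, and the Minkowski bound degrades by exactly the factor you were trying to save. A workable fix is to use the lattice in $\R^{d+1}$ generated by $\Z^{d+1}$ and $(\xi_1/N,\dots,\xi_d/N,1/N)$, which has covolume $1/N$ unconditionally, but then one has $d+1$ minima and must still argue the extra direction away.

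Separately, you never address properness, and with your parameters it fails. The quantity $\prod_i(2\lfloor\lambda_i^{-1}\rfloor+1)$ is the \emph{formal} size of $P$, not $|P|$; for the statement, and for its use via Proposition \ref{prop.changspullback}, one needs the cardinality, which requires $P$ to be proper. Properness is precisely what the linear independence of the $u_i$ buys: from $\sum_i(k_i-k_i')x_i\equiv 0\pmod N$ one gets $\sum_i(k_i-k_i')u_i\in\Z^d$, and if that lattice vector lies in the open cube $(-1,1)^d$ it vanishes, forcing $k_i=k_i'$. But with $L_i=\lfloor\lambda_i^{-1}\rfloor$ one only has $|\sum_i(k_i-k_i')u_{i,j}|\le\sum_i 2L_i\lambda_i\cdot\delta_j/d\le 2\delta_j$, which need not be $<1$. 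One must take the $L_i$ roughly half as large and then re-verify that the $|\Gamma|^{-|\Gamma|}$ constant survives; this is the book-keeping that the cited proof actually carries out and that is glossed over here.
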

Finally, `Chang's pullback and covering argument' is the following
result which converts a large progression contained in $2A-2A$
into a progression containing $A$. It may be found, for example,
as \cite[Proposition 26]{BJGSTSA}
\begin{proposition}\label{prop.changspullback}
Suppose that $A$ is a finite set of integers with $|A+A| \leq K|A|$
and $2A-2A$ contains a multidimensional progression of
dimension $d$ and size $\eta|A|$. Then $A$ is contained in a
multidimensional progression of size at most $\exp(O(d+K \log
K\eta^{-1}))|A|$ and dimension at most $O(d+K \log K\eta^{-1})$.
\end{proposition}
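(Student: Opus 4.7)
The plan is the classical two-step ``cover then bundle'' strategy: first cover $A$ by a few translates of $P-P$, and second contain the set of translate centers inside a low-dimensional progression. The second step is where the real work lies.

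For the first step I will combine Plünnecke--Ruzsa with Ruzsa's covering lemma. Since $|A+A| \leq K|A|$, Plünnecke gives $|3A-2A| \leq K^5|A|$; combined with $P \subset 2A-2A$ this yields $A+P \subset 3A-2A$ and hence $|A+P| \leq K^5|A| \leq K^5\eta^{-1}|P|$. Ruzsa's covering lemma then produces $X \subset A$ with $|X| \leq K^5\eta^{-1}$ such that $A \subset X + (P-P)$. Note that $P-P$ is a symmetric multidimensional progression of dimension $d$ and size at most $2^d|P| \leq 2^d|A|$.

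For the second step I aim to cover $X$ by a symmetric multidimensional progression $Q$ of dimension $O(K\log(K\eta^{-1}))$ and size $\exp(O(K\log(K\eta^{-1})))$. Naively taking each element of $X$ as its own dimension would give dimension of order $K^5\eta^{-1}$, which is far too large. Instead I will use an iterative, Chang-style dyadic halving: at each stage an application of Ruzsa's covering lemma (again exploiting that we are inside the small-doubling set $A$) halves the number of covering centers at the cost of introducing $O(K)$ new dimensions, and after $O(\log|X|) = O(\log(K\eta^{-1}))$ such rounds only a bounded residual remains, which is absorbed into $Q$. This is the Chang pullback step and is where the delicate accounting happens.

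Putting the two steps together gives $A \subset Q + (P-P)$, a multidimensional progression of dimension $d + O(K\log(K\eta^{-1})) = O(d+K\log(K\eta^{-1}))$ and size at most $|Q|\cdot|P-P| \leq \exp(O(K\log(K\eta^{-1})))\cdot 2^d|A| = \exp(O(d+K\log(K\eta^{-1})))|A|$, as required. The principal obstacle will be the second step: squeezing the dimension of the covering progression for $X$ down to $O(K\log(K\eta^{-1}))$ rather than a polynomial in $K\eta^{-1}$, via the iterated Ruzsa-covering accounting sketched above.
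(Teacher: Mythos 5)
Your two-step outline — cover $A$ by a few translates of $P-P$, then bundle the translate centres into a low-dimensional progression — is indeed the right skeleton, and this is the approach of the cited Proposition 26 of Green's \emph{Structure theory of set addition}. Your Step 1 is correct and complete: Pl\"unnecke gives $|3A-2A|\leq K^5|A|$, so $|A+P|\leq K^5|A|\leq K^5\eta^{-1}|P|$, and Ruzsa's covering lemma yields $X\subset A$ with $|X|\leq K^5\eta^{-1}$ and $A\subset X+(P-P)$.

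The gap is in Step 2, which you yourself flag as ``where the delicate accounting happens'' and then do not carry out. More importantly, the mechanism you describe does not, as stated, do what you claim. A single application of Ruzsa's covering lemma to $X\subset A$ (using $|X+A|\leq|A+A|\leq K|A|$) produces a set $T\subset X$ with $|T|\leq K$ and $X\subset T+(A-A)$: it \emph{collapses} the number of centres all the way to $\leq K$ in one shot, but at the price of the uncontrolled slack $A-A$, which is not a low-dimensional progression and cannot simply be absorbed into $Q$. It does not ``halve the number of covering centres at the cost of $O(K)$ new dimensions.'' If you try instead to replace $A-A$ by something structured using $A\subset X+(P-P)$, you run into $A-A\subset X-X+2(P-P)$, which reintroduces $X$ and is circular. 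The genuine content of Chang's covering argument is precisely how to escape this circularity — roughly, one iterates a covering step in which the auxiliary symmetric set is enlarged by $T_{i+1}-T_{i+1}$ with $|T_{i+1}|=O(K)$, and one must show that this forces the residual set of centres to decay geometrically while keeping everything inside a bounded iterated sumset $\ell A-\ell A$ so that Pl\"unnecke still controls sizes. Your proposal names the destination and gets the bookkeeping targets right ($O(K\log(K\eta^{-1}))$ extra dimensions, $\exp(O(K\log(K\eta^{-1})))$ size blow-up), but it does not supply the argument that actually achieves the geometric decay, and the one-line description of the step is incorrect. As written, the heart of the proposition is missing.
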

With these ingredients we can now prove Theorem
\ref{thm.improvedfreiman}.
\begin{proof}[Proof of Theorem \ref{thm.improvedfreiman}] We apply Proposition
\ref{prop.ruzsamodel} to get an integer $N$ with $N=K^{O(1)}|A|$ and
a set $A'\subset A$ with $|A'| \geq |A|/8$ such that $A'$ is Fre{\u\i}man
8-isomorphic to a subset $A''$ of $\Z/N\Z$. Note that $A''$ has
$|A''+A''| \leq K|A| \leq 8K|A''|$ and density $K^{-O(1)}$. Apply
Theorem \ref{thm.mainrefinement} to get that $2A''-2A''$
contains $B(\Gamma,\delta)\cap B(\Lambda,\epsilon)$
where
\begin{equation*}
  |\Gamma| = O(K^{1/2}\log K) \textrm{ and } \log \delta^{-1} = O(K^{1/4}\log^2K),
\end{equation*}
and
\begin{equation*}
  |\Lambda| = O(K^{3/4}\log K) \textrm{ and } \log \epsilon ^{-1} =
  O(\log K).
\end{equation*}
Proposition \ref{prop.largeprogressionsinbohrsets} then ensures that
$2A''-2A''$ contains a multidimensional progression of
dimension $O(K^{3/4}\log K)$ and size at least
$\exp(-O(K^{3/4}\log^3K))N$. Since $A''$ is Fre{\u\i}man 8-isomorphic to
a subset of $A$ we have that $2A''-2A''$ is Fre{\u\i}man 2-isomorphic
to a subset of $2A-2A$ which thus contains a multidimensional
progression of dimension $O(K^{3/4}\log K)$ and size at least
$\exp(-O(K^{3/4}\log^3K))|A|$. The result follows from Proposition
\ref{prop.changspullback}.
\end{proof}

It is worth remarking that for the purpose of improving
the bounds in Fre{\u\i}man's theorem for general abelian groups (the current best such appearing in the paper \cite{BJGIZR} of Green and Ruzsa) it would be desirable to pay closer attention to the
$\alpha$-dependencies in Theorem \ref{thm.mainrefinement}. These
contribute logarithmic terms in $\mathbb{Z}/N\mathbb{Z}$, but polynomial
terms when we do not have a modelling lemma of the strength of
Proposition \ref{prop.ruzsamodel}, as is the case in general.

\section{Improving the Konyagin-{\L}aba theorem: the proof of Theorem \ref{thm.konyaginlaba}}\label{sec.konyaginlaba}

We require two preliminary results. 
\begin{lemma}\label{lem.iteratedgrowth}
Suppose that $A \subset \R$, $\alpha \in \R\setminus \{0\}$ and
$|A+\alpha.A| \leq K|A|$. Then
\begin{equation*}
|(A-A)+\alpha.(A-A) + \dots + \alpha^{l-1}.(A-A)|
\leq K^{O(l)}|A|.
\end{equation*}
\end{lemma}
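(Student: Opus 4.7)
The plan is to rewrite the left-hand side as a single Minkowski difference and then control the resulting set by applying Pl\"unnecke--Ruzsa-type inequalities along the chain of dilates of $A$.

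Setting $T_l := A + \alpha.A + \alpha^2.A + \dots + \alpha^{l-1}.A$, one observes
\[
(A-A) + \alpha.(A-A) + \dots + \alpha^{l-1}.(A-A) = T_l - T_l,
\]
so it suffices to prove $|T_l - T_l| \leq K^{O(l)}|A|$. By Pl\"unnecke--Ruzsa this reduces in turn to bounding $|T_l|$ and the doubling $|T_l + T_l|/|T_l|$, both by $K^{O(l)}$.

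The structural input is dilation invariance: for every $i \geq 0$,
\[
|\alpha^i.A + \alpha^{i+1}.A| = |\alpha^i.(A + \alpha.A)| = |A + \alpha.A| \leq K|A|,
\]
so the sequence $A,\, \alpha.A,\, \alpha^2.A, \dots, \alpha^{l-1}.A$ forms a chain whose consecutive sumsets are all bounded by $K|A|$. I would iterate Ruzsa's triangle inequality along this chain, using the recursion $T_{l+1} = T_l + \alpha^l.A$ and inserting the consecutive set $\alpha^{l-1}.A$ as an intermediate point at each step, so that only the consecutive bound $K$ enters the recursion per step. This yields $|T_l| \leq K^{O(l)}|A|$, and the same argument applied to $T_l + T_l$ gives $|T_l + T_l| \leq K^{O(l)}|A|$. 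A single further application of Pl\"unnecke--Ruzsa then converts these bounds into $|T_l - T_l| \leq K^{O(l)}|A|$.

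The main obstacle is keeping the exponent of $K$ linear rather than quadratic in $l$. A naive application of the Pl\"unnecke--Ruzsa many-set inequality with reference set $A$ bounds $|A + \alpha^i.A|/|A|$ only by $K^{O(i)}$ (via iterated Ruzsa triangle), and multiplying these bounds over $i = 0, \dots, l-1$ yields merely $K^{O(l^2)}|A|$. To reach $K^{O(l)}$ one must exploit the one-dimensional chain structure, only ever comparing consecutive dilates $\alpha^i.A$ and $\alpha^{i+1}.A$ via the uniform bound $K$ rather than comparing a high dilate directly to the base.
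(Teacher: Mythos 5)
Your reformulation as $|T_l - T_l|$ with $T_l := A + \alpha.A + \dots + \alpha^{l-1}.A$ is correct, and the overall strategy of iterating the Ruzsa triangle inequality along the chain of dilates, using only consecutive comparisons, does yield the required $K^{O(l)}$ bound. This is a genuinely different route from the paper: Sanders works with $A' := A-A$ and applies Ruzsa's \emph{covering lemma} (not the triangle inequality) to produce a structural containment $A' + \alpha.A' + \dots + \alpha^{l-1}.A' \subset A' - A' + T_l$, where $T_l$ obeys the multiplicative recursion $T_{l+1} = S + T'-T' + \alpha.T_l$ with $|S|,|T'| \leq K^{O(1)}$, after which the size bound is immediate. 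The covering approach buys a clean structural statement and a transparent recursion; your triangle-inequality approach is more elementary in its toolkit but requires a more careful choice of the quantity that actually recurses.

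Two points in your sketch need tightening before the argument closes. First, applying the triangle inequality to $|T_l + \alpha^{l}.A|$ with pivot $\alpha^{l-1}.A$ does \emph{not} give $|T_{l+1}| \leq K^{O(1)}|T_l|$: it gives $|T_{l+1}| \leq K^{O(1)}|T_l + \alpha^{l-1}.A|$, and since $\alpha^{l-1}.A$ is already a summand of $T_l$, the right-hand side equals $|T_{l-1} + 2.(\alpha^{l-1}.A)|$ with a \emph{doubled} final term, not $|T_l|$. The quantity that does recurse cleanly is $R_l := T_{l-1} + \alpha^{l-1}.A + \alpha^{l-1}.A$; applying the triangle inequality with pivot $\alpha^{l-2}.A$ and using $|\alpha^{l-2}.A - 2.(\alpha^{l-1}.A)| = |A - 2.(\alpha.A)| \leq K^{O(1)}|A|$ (itself from a single triangle inequality plus Pl\"unnecke) gives $|R_l| \leq K^{O(1)}|R_{l-1}|$, and hence $|T_l| \leq K^{O(1)}|R_{l-1}| \leq K^{O(l)}|A|$. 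Second, your claim that ``the same argument applied to $T_l + T_l$'' gives $|T_l + T_l| \leq K^{O(l)}|A|$ is not right as stated: the analogous recursion for $2A_0 + 2A_1 + \dots + 2A_{l-1}$ accumulates extra copies at every step and does not close. The correct move is to bound $|A + T_l|$ instead, by running the $R_l$-recursion on the reversed chain with $\alpha^{-1}$ in place of $\alpha$ (which is legitimate since $|A + \alpha^{-1}.A| = |\alpha.A + A| \leq K|A|$), giving $|A + T_l| \leq K^{O(l)}|A|$; a single application of Pl\"unnecke--Ruzsa with reference set $A$ then yields $|T_l - T_l| \leq K^{O(l)}|A|$ directly, and the bound on $|T_l + T_l|$ is neither needed nor used.
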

\begin{proof}
Write $A':=A-A$ and $B':=\alpha.(A-A)$. By the Pl{\"u}nnecke-Ruzsa estimates
(\cite[Corollary 6.27]{TCTVHV}) we have $|kB'| \leq K^{2k}|A|$ for all
$k$. Since $\alpha \neq 0$ we have that $|kB'|=|kA'|$, whence $|kA'|
\leq K^{2k}|A|$ and, in particular, $|A'-A'| \leq K^4|A|$ and $|3A'-3A'|
\leq K^{12}|A|$. By this second inequality and Ruzsa's covering lemma
(\cite[Lemma 2.14]{TCTVHV}) there is a set $S$ with $|S| \leq K^{12}$
such that $3A'-2A' \subset A'-A' + S$. Again, by Ruzsa's covering lemma
and the fact that $|A+\alpha.A| \leq K|A|$ there is also a set $T$
with $|T|\leq K$ and $\alpha.A \subset A-A + T$, whence $B' \subset A'-A' + 2T-2T$. Put $T':=2T-2T$ and note that $|T'| \leq K^4$.

Now, write $B_l:=A' + \alpha.A' + \dots + \alpha^{l-1}.A'$ and define a
sequence of sets $T_l$ recursively by letting $T_1$ be some set
containing precisely one element of $A'$ and $T_{l+1}:=S+T'-T'+\alpha
T_l$. We shall show by induction that $B_l \subset A'-A' + T_l$. For
$l=1$ this is immediate. Assume that we have shown the result for
some $l$. Then
\begin{eqnarray*}
B_{l+1} = A'+ \alpha.B_l & \subset & A' + \alpha.A' - \alpha.A' + \alpha.T_l\\ & \subset & 3A'-2A' + T' - T' + \alpha.T_l\\ & \subset & A'-A' +
S+T'-T'+\alpha.T_l = A'-A'+T_{l+1}.
\end{eqnarray*}
The claim follows. It remains to note that $|B_l| \leq |A'-A'||T_l|
\leq K^4|A||T_l| \leq K^{20l-16}|A|$ as required.
\end{proof}
Note that a direct application of the Pl{\"u}nnecke-Ruzsa estimates gives only a bound of the form $K^{O(l^2)}|A|$.

The following is a straightforward modification of Proposition \ref{prop.ruzsamodel}.
\begin{proposition}\label{prop.ruzsamodelbooster}
Suppose that $A$ and $B$ are finite sets of integers with $|A+B|
\leq K\min\{|A|,|B|\}$ and $k \geq 2$ is a positive integer. Then
there is an integer $N$ with $N= K^{O(k)}\min\{|A|,|B|\}$, a
subset of $A$ of size at least $|A|/k$, a subset $B$, and a Fre{\u\i}man
$k$-isomorphism mapping these sets to $A''$ and $B''$ in $\Z/N\Z$ respectively. Furthermore, $|A''+B''| \leq kK\min\{|A''|,|B''|\}$.
\end{proposition}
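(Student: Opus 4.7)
The plan is to follow Ruzsa's random-dilation proof of Proposition \ref{prop.ruzsamodel} (as in \cite[Theorem 8.9]{MBN}) and modify it to handle $A$ and $B$ simultaneously. Set $n := \min\{|A|,|B|\}$ and assume without loss of generality that $|A| \leq |B|$. I would first feed $|A+B| \leq Kn$ into the Pl{\"u}nnecke--Ruzsa inequalities (\cite[Corollary 6.27]{TCTVHV}) and Ruzsa's triangle inequality to obtain $|rA+sB-r'A-s'B| \leq K^{O(k)}n$ for all $r,s,r',s' \leq k$; in particular the ``test set'' $D:=(kA+kB)-(kA+kB)$ has $|D| \leq K^{O(k)}n$. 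A standard counting argument (comparing the number of primes in a dyadic window against the total number of prime divisors of the nonzero integers in $D$) then supplies a prime $N = K^{O(k)}n$ such that no nonzero element of $D$ is divisible by $N$.

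Next I would choose $\lambda \in \{1,\dots,N-1\}$ uniformly at random and let $\phi_\lambda \colon \Z \to \{0,\dots,N-1\}$ denote the lift of $x \mapsto \lambda x \bmod N$. Partition $\{0,\dots,N-1\}$ into $k$ consecutive half-open intervals $I_0,\dots,I_{k-1}$ each of length $N/k$. Averaging separately over each of the two partitions yields indices $i,j$ with $A_0 := A \cap \phi_\lambda^{-1}(I_i)$ and $B_0 := B \cap \phi_\lambda^{-1}(I_j)$ of size $\geq |A|/k$ and $\geq |B|/k$ respectively. Let $s_i,s_j$ be the left endpoints of $I_i,I_j$, and set $A'' := \phi_\lambda(A_0)-s_i$ and $B'' := \phi_\lambda(B_0)-s_j$, each sitting in a window of length $N/k$ inside $\Z/N\Z$. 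The candidate Fre{\u\i}man $k$-isomorphism of pairs is the map sending $a \mapsto \phi_\lambda(a)-s_i$ on $A_0$ and $b \mapsto \phi_\lambda(b)-s_j$ on $B_0$. The forward direction is automatic from the mod-$N$ homomorphism property of $\phi_\lambda$. For the backward direction, any sum of the form $\sum_{\alpha=1}^r(\phi_\lambda(a_\alpha)-s_i)+\sum_{\beta=1}^{k-r}(\phi_\lambda(b_\beta)-s_j)$ has all summands in $[0,N/k)$ and hence lies in $[0,N)$; so the difference of two such sums with matching $r$ is in $(-N,N)$, and vanishing modulo $N$ forces vanishing in $\Z$. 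Unwinding the $\phi_\lambda$-values then gives $\lambda(\sum a_\alpha+\sum b_\beta-\sum a'_\alpha-\sum b'_\beta) \equiv 0 \pmod N$, which together with the invertibility of $\lambda$ and our choice of $N$ yields the integer equation, since the bracketed quantity lies in $D$.

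Finally, the sumset bound $|A''+B''| \leq kK\min\{|A''|,|B''|\}$ follows because the pair $k$-isomorphism puts $A''+B''$ in bijection with $A_0+B_0$, and $|A_0+B_0| \leq |A+B| \leq Kn \leq kK\min\{|A_0|,|B_0|\}$. I expect the main technical obstacle to be the ``cross'' backward direction above: coordinating two potentially distinct intervals $I_i,I_j$ for $A$ and $B$ so that every $k$-fold sum mixing $a$'s and $b$'s still fits inside a single $\Z$-interval of length less than $N$. This is precisely why the intervals are chosen of length $N/k$, and why the pair Fre{\u\i}man $k$-isomorphism condition must be formulated to preserve only those equations whose two sides contain the same number of $A$-terms (and hence the same number of $B$-terms).
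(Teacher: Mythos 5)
Your overall strategy (random dilation, pigeonhole into short intervals, reduce modulo a small prime) is the right one, and your variant with \emph{different} intervals $I_i,I_j$ for $A$ and $B$ and the attendant ``pair'' notion of Fre{\u\i}man $k$-isomorphism is a perfectly serviceable alternative to the paper's device of translating $B$ so that a single interval and a single isomorphism of $A_0\cup B_0$ suffice. But there is a genuine gap at the point where you produce $N$: you claim to find a prime $N=K^{O(k)}\min\{|A|,|B|\}$ such that no nonzero element of $D:=(kA+kB)-(kA+kB)$ is divisible by $N$, by counting prime divisors of the elements of $D$ against primes in a dyadic window. This fails because the \emph{magnitudes} of the integers in $D$ are controlled only by the diameter of $A\cup B$, not by $K$, $k$, or $|A|$, so the total number of prime divisors (even restricted to the window) is unbounded in the quantities you are allowed to use; indeed $D$ can easily be arranged to contain a multiple of every prime in your window. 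Randomizing $\lambda\in\{1,\dots,N-1\}$ does not repair this, since $N$ being prime makes $\lambda d\equiv 0\pmod N$ equivalent to $d\equiv 0\pmod N$, so the randomness you introduce is never actually used.

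The paper avoids this by a two-stage reduction. First one maps $A$ and $B$ by a Fre{\u\i}man $k$-isomorphism into $\Z/p\Z$ for a prime $p$ so large that no $k$-fold relation wraps around; this costs nothing but ensures all subsequent integers are bounded by $p$. Then one dilates by a uniformly random $q\in(\Z/p\Z)^{*}$, lifts to $\{0,\dots,p-1\}$, pigeonholes into intervals of length $p/k$, and finally reduces modulo $N:=|k(A+B)-k(A+B)|$. The probabilistic argument now lives over $q$: for each fixed nonzero $d$ in the test set (now an element of $\Z/p\Z$), $qd$ is uniform on $(\Z/p\Z)^{*}$, so the probability that its lift lands in the set $M(N,p)$ of small multiples of $N$ is roughly $1/N$, and a union bound over the $\leq N-1$ nonzero test differences gives a good $q$. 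Your proof needs to be restructured along these lines — the randomness must act on the \emph{large} modulus $p$, not on the final small modulus $N$ — before the rest of your (otherwise correct) argument goes through.
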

\begin{proof}
There is clearly a Fre{\u\i}man $k$-isomorphism which maps $A$ and $B$
onto $A'$ and $B'$, respectively, some subsets of $\Z/p\Z$ for a
sufficiently large prime $p$. Since $k \geq 2$ we have $|A'+B'|
=|A+B|$ and consequently we shall assume that $A$ and $B$ are
subsets of $\Z/p\Z$.

Suppose that $q \in \Z/p\Z^*$ and define $\phi:\Z/p\Z \rightarrow
\Z/p\Z; x \mapsto qx$ and $\phi':\Z/p\Z \mapsto \Z$ to be the map
which takes $x+p\Z$ to its least non-negative member. The range of
$\phi'$ is partitioned by the $k$ sets
\begin{equation*}
I_j:=\left\{x \in \Z: \frac{j-1}{k}p \leq x < \frac{j}{k}p-1\right\}
\textrm{ for } j \in [k],
\end{equation*}
and, moreover, $\phi'|_{\phi'^{-1}(I_j)}$ is clearly a Fre{\u\i}man
$k$-isomorphism for each $j \in [k]$.

By the pigeon-hole principle there is some $j=j(q)$ such that
\begin{equation*}
A(q):=(\phi' \circ \phi)^{-1}(I_{j} \cap A \textrm{ has } |A(q)|
\geq |A|/k;
\end{equation*}
similarly there is some $x=x(q) \in \Z/p\Z$ such that
\begin{equation*}
B(q):=(\phi' \circ \phi')^{-1}(I_{j(q)}) \cap (x+B) \textrm{ has
}|B(q)| \geq |B|/k.
\end{equation*}
Put $C(q)=A(q) \cup B(q)$, and note that $\phi' \circ \phi|_{C(q)}$
is a Fre{\u\i}man $k$-isomorphism.

Finally, let $N:= |k(A+B)-k(A+B)|$ and $\phi'':\Z \rightarrow
\Z/N\Z$ be the usual quotient map. For every $q$, $\psi:=\phi''
\circ \phi' \circ \phi|_{C(q)}$ is a Fre{\u\i}man $k$-homomorphism;
it is also a Fre{\u\i}man $k$-isomorphism: put
\begin{equation*}
I(q):=\left\{q(\sum_{i=1}^k{a_i} -
\sum_{i=1}^k{a_i'}):a_1,\dots,a_k,a_1',\dots,a_k' \in C(q)\right\},
\end{equation*}
and
\begin{equation*}
M(N,p):=\{kN+p\Z: 0 \leq k \leq \lfloor p/N\rfloor\}.
\end{equation*}
$\psi$ is a Fre{\u\i}man $k$-isomorphism for some $q$ if $I(q) \cap
M(N,p)=\{p\Z\}$. For each $q$ there are at most $|kC(q) - kC(q)|-1
\leq N-1$ non-zero elements in $I(q)$, whence if $q$ is chosen
uniformly at random from $\Z/p\Z^*$, we have
\begin{eqnarray*}
\E_q{|I(q) \cap M(N,p)|} & = & 1+\frac{(|M(N,p)|-1)(|I(q)|-1)}{p-1}\\
& \leq & 1+ \frac{(\lfloor p/N\rfloor -1)}{p-1}.(N-1)<2.
\end{eqnarray*}
We conclude that there is some $q$ such that $|I(q) \cap M(N,p)|<2$,
and so $I(q) \cap M(N,p)=\{p\Z\}$, and $\psi$ is a Fre{\u\i}man
$k$-isomorphism.

To finish the proof we put $A'':=\psi(A(q))$ and $B'':=\psi(B(q))$,
and since $\psi$ is a Fre{\u\i}man 2-isomorphism,
\begin{eqnarray*}
|A'' + B''| = |A(q)+B(q)| & \leq & |A+(x(q)+B)|\\ & \leq &
K\min\{|A|,|B|\} \leq kK\min\{|A''|,|B''|\}.
\end{eqnarray*}
The bound on $N$ follows from the Pl{\"u}nnecke-Ruzsa estimates
(\cite[Corollary 6.27]{TCTVHV}).
\end{proof}
The following argument is due to J. Bourgain.
\begin{proof}[Proof of Theorem \ref{thm.konyaginlaba}]
Since $A\cup \alpha.A$ is finite it generates a finite dimensional $\Z$-module in $\R$, whence $A \cup \alpha.A$ is Fre{\u\i}man isomorphic of all orders to a subset $A' \cup B'$ of $\Z^d$. (Here, of course, $A$ is isomorphic to $A'$ and $\alpha.A$ to $B'$.)

Let $L$ be the the largest coefficient occurring
in any element of $A' \cup B'$ (we may assume, by a translation, that they are all positive). The map
\begin{equation*}
(x_1,\dots,x_d) \mapsto x_1 + 17Lx_2+(17L)^2x_3+\dots+(17L)^{d-1}x_d
\end{equation*}
is a Fre{\u\i}man $8$-isomorphism of $A' \cup B'$ into $\Z$ from
which it follows that $|A'+B'| \leq K\min\{|A'|,|B'|\}$ since $|A'|
= |B'|$. By Proposition \ref{prop.ruzsamodelbooster} there is an integer $N$ with $N=K^{O(1)}|A|$, a subset of $A'$ of size at least $|A|/8$, a subset of $B'$ and a Fre{\u\i}man $8$-isomorphism mapping these sets to $A''$ and $B''$ in $\Z/N\Z$ respectively such that $|A'' +B''| \leq
8K\min\{|A''|,|B''|\}$.

It follows from Theorem \ref{thm.mainrefinement} that $(A''-A'')+(B''-B'')$
contains $B(\Gamma,\delta) \cap B(\Lambda,\epsilon)$ where
\begin{equation*}
|\Gamma| =O(K^{1/2}\log K) \textrm{ and } \log \delta^{-1}
=O(K^{1/4}\log K)
\end{equation*}
and
\begin{equation*}
|\Lambda| = O(K^{3/4}\log K) \textrm{ and } \log \epsilon^{-1} =
O(\log K).
\end{equation*}
(Essentially) by Lemma 2.0(i) of the paper we have that
\begin{equation*}
|B(\Gamma,\eta\delta) \cap B(\Lambda,\eta\epsilon)| \geq
(\eta\delta)^{|\Gamma|}(\eta\epsilon)^{|\Lambda|}N.
\end{equation*}
If $\eta >
\max\{\delta^{-1}N^{-1/2|\Gamma|},\epsilon^{-1}N^{-1/2|\Lambda|}\}$
then this intersection has size greater than 1 and hence contains a
non-zero element say $d$. Moreover by the triangle inequality, for
any $k$ with $|k| \leq \min\{\delta N^{1/2|\Gamma|},\epsilon
N^{1/2|\Lambda|}\}$ we have $kd \in B(\Gamma,\eta\delta) \cap
B(\Lambda,\eta\epsilon)$.  Thus $(A''-A'')+(B''-B'')$ contains an arithmetic
progression of length
\begin{equation*}
L \geq \min\{\delta N^{1/2|\Gamma|},\epsilon N^{1/2|\Lambda|}\} \gg
K^{-O(1)}|A|^{1/CK^{3/4}\log K}
\end{equation*}
for some absolute $C>0$, and hence $(A-A)+(\alpha.A-\alpha.A)$ contains an arithmetic
progression $P$ of length $L$.

By Lemma \ref{lem.iteratedgrowth} with parameter $2l$ and some manipulations we get that
\begin{eqnarray*}
 |((A-A)+\alpha.(A-A)) + \alpha^2.((A-A)+\alpha.(A-A)) +\dots & & \\  + \alpha^{2(l-1)}.((A-A)+\alpha.(A-A))|& & \leq K^{O(l)}|A|
\end{eqnarray*}
However,  $(A-A)+\alpha.(A-A)$ contains $P$ as so we have (by the transcendance of $\alpha$) that the left hand side is at least $|P|^{l}$. Taking $l=\lceil 2CK^{3/4}\log K\rceil$ and inserting the lower bound for $|P|$, the result follows.
\end{proof}

\bibliographystyle{alpha}

\bibliography{master}

\end{document}